\newcommand{\address}[1]{\thanks{#1}}
\newcommand{\orgdiv}[1]{#1}
\newcommand{\orgname}[1]{#1}
\newcommand{\orgaddress}[1]{#1}
\newcommand{\street}[1]{#1}
\newcommand{\postcode}[1]{#1}
\newcommand{\country}[1]{#1}
\newcommand{\State}{\STATE}
\newcommand{\For}{\FOR}
\newcommand{\EndFor}{\ENDFOR}
\newcommand{\If}{\IF}
\newcommand{\EndIf}{\ENDIF}
\newcommand{\Else}{\ELSE}
\newcommand{\ElsIf}{\ELSIF}
\newcommand{\Return}{\RETURN}
\newcommand{\While}{\WHILE}
\newcommand{\EndWhile}{\ENDWHILE}
\theoremstyle{thmstyletwo}
\newtheorem{proposition}{Proposition}
\newtheorem{lemma}{Lemma}
\newtheorem{theorem}{Theorem}
\theoremstyle{definition}
\newtheorem{remark}{Remark}
\DeclareMathOperator{\prox}{prox}
\DeclareMathOperator{\dist}{dist}
\DeclareMathOperator{\dom}{dom}
\DeclareMathOperator{\proj}{proj}
\begin{document}






\title{Monitoring the Convergence Speed of PDHG \\ to Find Better Primal and Dual Step Sizes}

\author{Olivier Fercoq
		\address{\orgdiv{LTCI}, \orgname{Télécom Paris, Institut Polytechnique de Paris}, \orgaddress{\street{Palaiseau}, \postcode{91120}, \country{France}},
		\texttt{olivier.fercoq@telecom-paris.fr}}
}


\maketitle
\abstract{Primal-dual algorithms for the resolution of convex-concave saddle point problems usually come with one or several step size parameters. Within the range where convergence is guaranteed, choosing well the step size can make the difference between a slow or a fast algorithm. A usual way to adaptively set step sizes is to ensure that there is a fair balance between primal and dual variable's amount of change.
	In this work, we show how to find even better step sizes for the primal-dual hybrid gradient. Getting inspiration from quadratic problems, we base our method on a spectral radius estimation procedure and try to minimize this spectral radius, which is directly related to the rate of convergence. Building on power iterations, we could produce spectral radius estimates that are always smaller than 1 and work also in the case of conjugate principal eigenvalues. 
	For strongly convex quadratics, we show that our step size rule yields an algorithm as fast as inertial gradient descent. Moreover, since our spectral radius estimates only rely on residual norms, our method can be readily adapted to more general convex-concave saddle point problems. 
	In a second part, we extend these results to a randomized version of PDHG called PURE-CD. We design a statistical test to compare observed convergence rates and decide whether a step size is better than another. 
	Numerical experiments on least squares, sparse SVM, TV-L1 denoising and TV-L2 denoising problems support our findings. }

\section{Introduction}

Primal-dual algorithms for the resolution of convex-concave saddle point problems, like for instance Primal-Dual Hybrid Gradient~\cite{chambolle2011first}, usually come with one or several step size parameters. Within the range where convergence is guaranteed, choosing well the step size can make the difference between a slow or a fast algorithm. Common knowledge states that good step sizes should lead to a fair balance between primal and dual variable's amount of change.

However, if this kind of expert tuning works well when routinely solving similar problems, it has obvious limits. First, the meaning of a fair balance is not quantitatively defined. 
Second, this does not help much when designing a general purpose solver like \cite{applegate2021practical}. A natural solution is to set the step sizes adaptively by monitoring key quantities about the iterates. A notable example is given by \cite{goldstein2015adaptive} where the step sizes are modified in order to balance the size of primal and dual residuals. However, despite indisputable numerical experiments, they could only prove convergence of the adaptive algorithm using an artificial slow down of the step size updates. In particular they do not theoretically show that the adaption is useful, even for a given restricted class of problems. This work has been extended recently to the case of stochastic PHDG in \cite{chambolle2023stochastic}.
When the Lagrangian function is strongly convex-concave, \cite{islamov2021pdhg} developed a method that provably adapts the step size. However, it requires the knowledge of the strong convexity constant, which is quite restrictive in the context of primal-dual methods.

We propose here new insights for the adaptive setting of step sizes. 
We show that on quadratic problems, it is possible to find better step sizes than Goldstein et al.'s \cite{goldstein2015adaptive}. We base our method on a spectral radius estimation procedure and try to minimize this spectral radius, which is directly related to the rate of convergence. Building on power iterations~\cite{hager2021applied}, we could produce spectral radius estimates that are always smaller than 1, work in the case of conjugate pairs of principal eigenvalues and induce a minor computational overhead. 

For strongly convex quadratics, we show that our step size rules yields an algorithm as fast as inertial gradient descent~\cite{nesterov2003introductory}. Moreover, since our spectral radius estimates only rely on residual norms, our method can be readily adapted to more general convex-concave saddle point problems. 

Other contributions include an adaptation of Goldstein et al.'s adaptive step sizes to Tri-PD~\cite{latafat2018primal} and the combination of Goldstein et al.'s step sizes with ours. Indeed, the former is effective from the first iterations while ours takes some time before being able to decide a change in step sizes.

In order to solve large scale problems, it is much more efficient to rely on a randomized algorithm, like S-PDHG~\cite{chambolle2018stochastic} or PURE-CD~\cite{alacaoglu2020random}.
These algorithms only update one primal and some of the dual variables yielding much cheaper iterations. Moreover, the conditions on the step sizes are quite favorable and after seeing all the data, much more progress has been done towards the solution set. 
However, like their deterministic counterpart, current theory is not giving much insight on how to precisely set their step sizes. 

We first revisit the residual balance strategy proposed in \cite{chambolle2023stochastic} for S-PDHG and adapt it to PURE-CD. Then, we propose a way to monitor the convergence of the algorithm despite the stochastic nature of the algorithm. We propose two models for the observed instantaneous rate: either independent and identically distributed or autoregressive of order 1. In both cases, we design a statistical test and accept to change the step size only if the new one gives a statistically significant improvement in the rate.

Like previous works~\cite{goldstein2013adaptive,chambolle2023stochastic}, our convergence results only provide conditions under which convergence can be retained with adaptive steps.
However, our results allow an asymptotically unbounded change in the amplitude of the step sizes, which was not possible with previous approaches. 
Our proof technique bases on the smoothed duality gap~\cite{tran2018smooth,walwil:hal-04501394}, which allowed us to show convergence of the feasibility gap and optimality gap without basing on a given constant norm to measure distances.

We conclude the paper with numerical experiments on least squares, sparse SVM, TV-L1 denoising and TV-L2 denoising. In the four cases, we make similar conclusions. Residual balance techniques for deterministic or randomized versions of PDHG very quickly identifies good step sizes. Indeed, they usually outperform a general purpose initialization that we set before observing the data. However, although convergence monitoring takes more time to identify good step sizes, the final value is much better than with residual balance.
The challenge is to find accurate estimates of the convergence rate by observing only a few iterations in order to be able to discriminate between two set of step sizes.

\section{Problem, notation and primal-dual algorithm}

We consider the composite optimization problem $\min_{x \in \mathbb R^n} f(x) + f_2(x) + g(Ax)$ where $A$ is a $m \times n$ matrix, $f$ and $g$ are convex lower semi-continuous functions whose proximal operator is easily computable and $f_2$ is a convex differentiable function whose gradient is $L_f$-Lipschitz continuous.
To solve it, we shall resort to the equivalent saddle point problem
\begin{equation}
\min_{x\in \mathbb R^n} \max_{y \in \mathbb R^m} f(x) + f_2(x) + \langle Ax, y\rangle - g^*(y) 
\label{pb:saddle_point}
\end{equation}
where $g^*$ is the Fenchel conjugate of $g$. The qualification condition $0 \in \mathrm{ri}(\dom g - A(\dom f))$, that we shall assume throughout, is enough to guarantee that the values of both problems are equal~\cite{bauschke2011convex}.

Primal-Dual Hybrid Gradient is an algorithm that is able to solve this problem by using only matrix-vector products, proximal operators and gradients as elementary operations.
We shall consider two versions of PDHG, that are equivalent when $f_2 = 0$~\cite{condat2023proximal}.
We will use the letter $x$ for primal variables, $y$ for dual variables and $z = (x,y)$ when working in primal and dual space together.

\subsection{V\~{u}-Condat algorithm}

For step size parameters $\sigma_k >0$ and $\tau_k > 0$ that satisfy $\sigma_k \tau_k \|A\|^2 + \tau_k L_f/2 < 1$ for all $k$, the  algorithm is given by

\begin{algorithm}
 $x_0 \in \mathbb R^n$, $y_0 \in \mathbb R^m$ and for all $k \geq 0$:
\begin{align*}
&y_{k+1} = \prox_{\sigma_k g^*}(y_k + \sigma_k A x_k) \\
&x_{k+1} = \prox_{\tau_k f} (x_k - \tau_k \nabla f_2(x_k) - \tau_k A^\top (2 y_{k+1} -y_k)) 
\end{align*}
\caption{V\~{u}-Condat algorithm}
\label{alg:vu-condat}
\end{algorithm}

Denote $T_{VC}$ the operator such that $z_{k+1} = T_{VC}(z_k)$. It has been shown in~\cite{vu2013splitting} and \cite{condat2013primal} that $T_{VC}$ is an averaged operator when considering the norm $\|\cdot\|_{V_k}$ defined by $\|z\|^2_{V_k} = \frac 1 {\tau_k} \|x\|^2 + 2 \langle Ax, y\rangle + \frac 1 {\sigma_k} \|y\|^2$. This implies the convergence of the algorithm in the constant step size case.

\subsection{Latafat et al.'s Tri-PD algorithm}

For step size parameters $\sigma_k >0$ and $\tau_k > 0$ that satisfy $\sigma_k \tau_k \|A\|^2 < 1$  and $\tau_k L_f < 2$, the  algorithm is given by 
\begin{algorithm}
$x_0 \in \mathbb R^n$, $y_0 \in \mathbb R^m$ and for all $k \geq 0$:
\begin{align*}
&\bar x_{k+1} = \prox_{\tau_k f} (x_k - \tau_k \nabla f_2(x_k) - \tau_k A^\top y_{k}) \\ 
&y_{k+1} = \bar y_{k+1} = \prox_{\sigma_k g^*}(y_k + \sigma_k A \bar x_{k+1}) \\
& x_{k+1} = \bar x_{k+1} - \tau_k A^\top (y_{k+1} - y_k)
\end{align*}
\caption{Tri-PD algorithm}
\label{alg:tri-pd}
\end{algorithm}

Denote $T_L$ the operator such that $z_{k+1} = T_L(z_k)$. It has been shown in~\cite{latafat2018primal} that $T_L$ is an averaged operator when considering the norm $\|\cdot\|_{V_k}$ defined by $\|z\|^2_{V_k} = \frac 1 {\tau_k} \|x\|^2 + \frac 1 {\sigma_k} \|y\|^2$, which implies the convergence of the algorithm in the constant step size case.

In the case $f_2=0$ and $\tau_k = \tau$, both algorithms are equivalent (up to a change of indices) because $x_k - \tau A^{\top} y_k = \bar x_k - \tau A^\top (2 \bar y_k - \bar y_{k-1})$.

\subsection{Linear convergence rate}

\label{sec:linconv}

Suppose that the Lagrangian function $L$ defined by $L(x,y) = f(x) + f_2(x) + \langle Ax, y\rangle - g^*(y)$ satisfies that $(x \mapsto L(x,y))$ is strongly convex for all $y$ with constant $\mu_f$ and $(y \mapsto L(x,y))$ is strongly concave for all $x$ with constant $\mu_{g^*}$. In that case, both versions of PDHG converge linearly (see \cite{fercoq2019coordinate} for Vu-Condat and \cite{fercoq2022quadratic} for Tri-PD). Yet, the rate depends on the strong convexity constants: there exists $c > 0$ that depends only on the algorithm such that for all $k$,
\begin{equation}
\|z_{k+1} - z_*\|^2_V \leq \big(1 - c\min(\tau \mu_f, \sigma \mu_{g^*}) \big)\|z_k - z_*\|^2 \;. 
\label{eq:rate_strong_convex}
\end{equation}
Hence, if we know the values of $\mu_f$ and $\mu_{g^*}$, we can choose the step sizes $\sigma$ and $\tau$ that would maximize $\min(\tau \mu_f, \sigma \mu_{g^*})$ while satisfying the constraints guaranteeing convergence.

Note that the resulting algorithm is indeed quite good. In the case where $f_2=0$, the only constraint is $\sigma\tau \|A\|^2 < 1$, so that we may choose $\sigma = \sqrt{\frac{ \gamma\mu_f}{\mu_{g^*}\|A\|^2}}$ and $\tau = \sqrt{\frac{ \gamma\mu_{g^*}}{\mu_{f}\|A\|^2}}$ for $\gamma < 1$ but close to 1. We obtain
$\min(\tau \mu_f, \sigma \mu_{g^*}) = \sqrt{\frac{ \gamma\mu_f\mu_{g^*}}{\|A\|^2}} \approx \sqrt{\frac{\mu_f\mu_{g^*}}{\|A\|^2}}$, which is optimal for this class of problems \cite{nesterov2003introductory}.

Yet, PDHG does converge linearly for much more general classes of problems. In particular, for all piecewise linear-quadratic problems, which include linear programs and quadratic programs, PDHG enjoys linear convergence. However, the constants are usually unknown and the influence of the step size on the rate is not well understood.

\section{Adaptive step sizes based on residual balance}

\subsection{Goldstein et al.'s adaptive step sizes}

In~\cite{goldstein2013adaptive} and \cite{goldstein2015adaptive}, Goldstein, Li, Yuan, Esser and Baraniuk proposed an adaptive way of setting the step sizes for Chambolle and Pock's algorithm, i.e. Algorithm~\ref{alg:vu-condat} in the case $f_2=0$. 
Their intuition is that good primal and dual step sizes should balance the progress between the primal and dual space. 
Our goal is to find a saddle point of $L$, that is a point $z_* = (x_*, y_*)$ such that
\begin{align*}
&0 \in \partial f(x_*) + A^\top y_* \\
&0 \in \partial g^*(y_*) - Ax_*
\end{align*}

Said otherwise, if for $z = (x,y)$, we denote $F(z) = (\partial f(x) + A^\top y) \times (\partial g^*(y) - Ax)$, our goal is to find $z$ such that $\|F(z)\|_0 = \min \{\|q\|, q \in F(z)\}$ is as small as possible~\cite{lu2022infimal}. 
Goldstein et al.\ first remarked that one can easily find a point in $F(z_{k+1})$. From the definition of Algorithm~\ref{alg:vu-condat}, we have
\begin{align*}
& 0 \in \sigma_k \partial g^*(y_{k+1}) + y_{k+1} - y_k - \sigma_k A x_k\\
& 0 \in \tau_k \partial f(x_{k+1}) + x_{k+1} - x_k + \tau_k A^\top (2 y_{k+1} - y_k)
\end{align*}
so that
\begin{align*}
d_{k+1} = & \frac{1}{\sigma_k} (y_k - y_{k+1}) + A(x_k - x_{k+1}) \in \partial g^*(y_{k+1}) - A x_{k+1}\\
p_{k+1} = & \frac{1}{\tau_k} (x_k - x_{k+1}) + A^\top (y_k - y_{k+1})  \in \partial f(x_{k+1}) + A^\top y_{k+1} 
\end{align*}

Then, we can compare $\|d_{k+1}\|_1$ and $\|p_{k+1}\|_1$. If $\|p_{k+1}\|_1 > \Delta \|d_{k+1}\|_1$ for some $\Delta > 1$, this means that the primal space is given too much importance and we should decrease $\tau_k$ (and increase $\sigma_k$ accordingly). Similar considerations hold if the unbalance is opposite. The choice of the 1-norm is motivated experimentally.
In the end, we obtain Algorithm~\ref{alg:goldstein}.

\begin{algorithm}
	\begin{algorithmic}
		\State Parameters: $\underline \alpha = 10^{-4}, \eta = 0.95, \Delta=1.5$
		\If{$\alpha_k \leq \underline \alpha$}
		\State Skip step size adaptation: $\tau_{k+1} = \tau_k$, $\sigma_{k+1} = \sigma_k$, $\alpha_{k+1} = \alpha_{k}$
		\Else  
		\State $p_{k+1} = \frac{1}{\tau_k} (x_k - x_{k+1}) - A^\top (y_k - y_{k+1})$
		\State $d_{k+1} = \frac{1}{\sigma_k} (y_k - y_{k+1}) - A(x_k - x_{k+1})$ 
		\If{$\|p_{k+1}\|_1 \geq \Delta \|d_{k+1}\|_1$}
		\State $\tau_{k+1} = \tau_k / (1-\alpha_k)$, $\sigma_{k+1} = \sigma_k (1-\alpha_k)$, $\alpha_{k+1} = \alpha_k \eta$
		\ElsIf{$\|d_{k+1}\|_1 \geq \Delta \|p_{k+1}\|_1$}
		\State $\tau_{k+1} = \tau_k (1-\alpha_k)$, $\sigma_{k+1} = \sigma_k / (1-\alpha_k)$, $\alpha_{k+1} = \alpha_k \eta$
		\Else
		\State $\tau_{k+1} = \tau_k$, $\sigma_{k+1} = \sigma_k$, $\alpha_{k+1} = \alpha_k$
		\EndIf
		\EndIf
		\Return $\tau_{k+1}, \sigma_{k+1}, \alpha_{k+1}$
	\end{algorithmic}
	\caption{Goldstein et al.'s adaptive steps -- Goldstein$(z_k, z_{k+1}, \tau_k, \sigma_k, \alpha_k)$}
	\label{alg:goldstein}
\end{algorithm}

The algorithm includes a geometric slow-down in the updates. This ensures that the adaptive step size rule will not prevent convergence because the step sizes will eventually be nearly constant. Moreover, we can show that $\forall k,$
\begin{align*}
\tau_k& \leq \tau_0 \Big(\prod_{i=0}^{k-1} (1-\alpha_i)\Big)^{-1}
= \tau_0 \Big(\prod_{i=0}^{k-1} (1-\alpha_0 \eta^i)\Big)^{-1} = \tau_0 \exp\Big(-\sum_{i=0}^{k-1} \log(1-\alpha_0\eta^i)\Big) \\
&\leq \tau_0 \exp \Big(-\sum_{i=0}^{k-1}\log(1-\alpha_0) \eta^i \Big)
= \tau_0 \exp\Big(-\log(1-\alpha_0)\frac{1-\eta^k}{1-\eta} \Big) \leq \tau_0 \exp\Big(-\frac{\log(1-\alpha_0)}{1-\eta} \Big) \\
& \leq \tau_0 (1-\alpha_0)^{-\frac {1}{1-\eta}}
\end{align*}
where we used the inequality $\log(1-x) \geq \frac{x}{x_0} \log(1-x_0)$ valid for all $x \in [0, x_0]$ by concavity of the logarithm.
We can prove similarly that $\tau_k \geq \tau_0 (1-\alpha_0)^{\frac {1}{1-\eta}}$ and similar results for $\sigma_k$. With $\alpha_0 = 0.5$ and $\eta = 0.95$, we obtain $(1-\alpha_0)^{-\frac {1}{1-\eta}} \approx 10^6$. Hence, this gives a large updating power to the method but at the same time prevents a race without end if we ever encounter a pathological cases.

Let us now give an example where we can show optimality of Goldstein et al.'s adaptive step sizes.
\begin{proposition}
Consider the toy problem
\[
\min_{x \in \mathbb R^n, x^0 \in \mathbb R} \max_{y \in \mathbb R^m, y^0 \in \mathbb R} f(x) - g^*(y) + \iota_{\{0\}}(x^0) + x^0 y^0 - \iota_{\{0\}}(y^0)
\]
where $f$ in $\mu_f$-strongly convex and $g^*$ is $\mu_{g^*}$-strongly convex
and suppose we are solving it with Algorithm \ref{alg:vu-condat} with constant step sizes $\tau$ and $\sigma$ such that $\sigma \tau = \gamma < 1$.

Whatever the value of $\Delta$,
if $\tau >  \sqrt{\frac{ \gamma\mu_{g^*}}{\mu_{f}}}$, then eventually $\Delta \|p_k\|_1 < \|d_k\|_1$ and if  $\tau < \sqrt{\frac{ \gamma\mu_{g^*}}{\mu_{f}}}$, then eventually $\Delta \|d_k\|_1 < \|p_k\|_1$.
\end{proposition}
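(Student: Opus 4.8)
The key observation is that the toy problem decouples into two independent pieces. The extra coordinates $x^0 \in \{0\}$ and $y^0 \in \{0\}$ are forced to be zero by the indicator functions (and stay zero under the iteration since $\prox$ of an indicator is a projection, so $x^0_k = 0$ and $y^0_k = 0$ for all $k \geq 1$ regardless of initialization). Hence the term $x^0 y^0$ never contributes to the dynamics on $(x,y)$, and the $(x,y)$ block simply runs Vu--Condat with $A = 0$ applied to $\min_x \max_y f(x) - g^*(y)$. So the $x$-update is $x_{k+1} = \prox_{\tau f}(x_k)$ and the $y$-update is $y_{k+1} = \prox_{\sigma g^*}(y_k + \sigma A x_k) = \prox_{\sigma g^*}(y_k)$ — two completely independent proximal-point iterations on strongly convex functions. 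I would begin by making this decoupling explicit.

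Next I would analyze the asymptotic rate of each proximal-point iteration. For a $\mu$-strongly convex $f$, the map $x \mapsto \prox_{\tau f}(x)$ is a contraction toward the minimizer $x_*$ with $\|x_{k+1} - x_*\| \le \frac{1}{1+\tau\mu_f}\|x_k - x_*\|$, and this rate is tight when $f$ is quadratic near $x_*$ (and asymptotically, by a second-order argument, in general — but for the proposition it suffices to note that the generic/worst-case asymptotic contraction factor is $\frac{1}{1+\tau\mu_f}$, and one can just state the result for quadratic $f,g^*$ or argue $\limsup$ of the ratio of consecutive residuals equals this factor). Similarly the $y$-iteration contracts at rate $\frac{1}{1+\sigma\mu_{g^*}}$. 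The primal residual from the algorithm is $p_{k+1} = \frac{1}{\tau}(x_k - x_{k+1})$ (the $A^\top$ term vanishes since $A=0$ on this block, and the $x^0$ component of $p$ is $0$), and the dual residual is $d_{k+1} = \frac{1}{\sigma}(y_k - y_{k+1})$. So $\|p_{k+1}\|_1 = \frac{1}{\tau}\|x_k - x_{k+1}\|_1$ decays geometrically at rate $\frac{1}{1+\tau\mu_f}$, while $\|d_{k+1}\|_1 = \frac{1}{\sigma}\|y_k - y_{k+1}\|_1$ decays at rate $\frac{1}{1+\sigma\mu_{g^*}}$.

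Then I would compare the two geometric decay rates. The ratio $\|p_k\|_1 / \|d_k\|_1$ behaves like $C \cdot \big(\tfrac{1+\sigma\mu_{g^*}}{1+\tau\mu_f}\big)^k$ for some constant $C>0$. If $\tau > \sqrt{\gamma \mu_{g^*}/\mu_f}$, then using $\sigma = \gamma/\tau$ one checks $\sigma \mu_{g^*} = \gamma\mu_{g^*}/\tau < \tau\mu_f$, i.e. $1+\sigma\mu_{g^*} < 1+\tau\mu_f$, so the ratio $\|p_k\|_1/\|d_k\|_1 \to 0$; hence eventually $\Delta\|p_k\|_1 < \|d_k\|_1$ for any fixed $\Delta$. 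The symmetric inequality $\tau < \sqrt{\gamma\mu_{g^*}/\mu_f}$ gives $\sigma\mu_{g^*} > \tau\mu_f$, the ratio diverges, and eventually $\Delta\|d_k\|_1 < \|p_k\|_1$. One should also handle degenerate subcases — e.g. if some residual is exactly zero at finite time (the iterate already at the optimum), which only makes the claimed inequality easier, or trivially true.

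**Main obstacle.** The cleanest technical point is pinning down the \emph{exact} asymptotic contraction rate of $\prox_{\tau f}$ for a general (not necessarily quadratic) strongly convex $f$: one wants $\lim_k \|x_{k+1}-x_*\|/\|x_k - x_*\| = \frac{1}{1+\tau\mu_f}$ rather than just the upper bound, since an upper bound on both rates is not by itself enough to compare them. This is handled either by restricting attention to quadratic $f$ and $g^*$ (which matches the "getting inspiration from quadratics" theme and the word "toy"), or by a local second-order expansion of the subdifferential near the minimizer showing the worst eigendirection governs the tail. Everything else — the decoupling, the form of $p_{k+1}$ and $d_{k+1}$, the algebra translating $\tau \gtrless \sqrt{\gamma\mu_{g^*}/\mu_f}$ into a comparison of decay rates — is routine.
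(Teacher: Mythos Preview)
Your approach is essentially the same as the paper's: decouple the $(x^0,y^0)$ block (trivially solved at $0$, only there to enforce $\sigma\tau<1$) from the $(x,y)$ block, observe the latter reduces to two independent proximal-point iterations on $f$ and $g^*$, and compare their geometric decay rates via the inequality $\tau\mu_f \gtrless \sigma\mu_{g^*} = \gamma\mu_{g^*}/\tau$. You are in fact more careful than the paper on two points --- you give the correct proximal-point contraction factor $\tfrac{1}{1+\tau\mu_f}$ (the paper writes $1-\tau\mu_f$, which is only an approximation) and you flag that one needs the rate to be \emph{exact}, not merely an upper bound, for the comparison to go through --- but these refinements do not change the structure of the argument.
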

\begin{proof}
The convex-concave function has a first part where primal and dual contributions are completely decoupled and a second part which is trivially solved with $(x^0, y^0) = (0,0)$ but constrains the step sizes to satisfy $\sigma \tau = \gamma < \|A\|^2 = 1$.
Hence, on this problem, Algorithm \ref{alg:vu-condat} is equivalent to the proximal point method run in parallel on $f$ and $g^*$. This implies that $\|p_k\|_1$ will converge to 0 with a rate equal to $(1-\tau \mu_f)$ and  $\|d_k\|_1$ will converge to 0 with a rate equal to $(1-\sigma \mu_{g^*})$.

This means that, if $\tau > \sqrt{\frac{ \gamma\mu_{g^*}}{\mu_{f}}}$, then there will be some $k$ such that $\Delta \|p_k\|_1 < \|d_k\|_1$ and then $\tau$ should be decreased according to Algorithm~\ref{alg:goldstein}. 
Similarly, if $\tau < \sqrt{\frac{ \gamma\mu_{g^*}}{\mu_{f}}}$, then there will be some $k$ such that $\Delta \|d_k\|_1 < \|p_k\|_1$ and then $\tau$ should be increased according to Algorithm~\ref{alg:goldstein}. 
\end{proof}

As we will see in numerical experiments, Goldstein et al.'s adaptive step size is very efficient to detect big discrepancies between primal and dual residuals.
However, when the sequence of iterates oscillates between the primal and dual space, it cannot keep track of these oscillations and its damping factor automatically stops the adaptation of the step sizes. Moreover, the step sizes we obtain are usually rather good but far from being optimal in the long run.

\subsection{Generalization to the case $f_2 \neq 0$}

We now consider Algorithm \ref{alg:vu-condat} with $f_2 \neq 0$.
In this case, we want to find a zero of the opertor $F(z) = (\partial f(x) + \nabla f_2(x) + A^\top y) \times (\partial g^*(y) - Ax)$.
To estimate residual balance, we need a point in $F(z_{k+1})$. From the definition of Algorithm~\ref{alg:vu-condat}, we have
\begin{align*}
& 0 \in \sigma_k \partial g^*(y_{k+1}) + y_{k+1} - y_k - \sigma_k A x_k\\
& 0 \in \tau_k \partial f(x_{k+1}) + \tau_k \nabla f_2(x_k) + x_{k+1} - x_k + \tau_k A^\top (2 y_{k+1} - y_k)
\end{align*}
so that
\begin{align*}
d_{k+1} = & \frac{1}{\sigma_k} (y_k - y_{k+1}) + A(x_k - x_{k+1}) \in \partial g^*(y_{k+1}) - A x_{k+1}\\
p_{k+1} = & \frac{1}{\tau_k} (x_k - x_{k+1}) +\nabla f_2(x_{k+1}) - \nabla f_2(x_k) + A^\top (y_k - y_{k+1})  \in \partial f(x_{k+1})+ \nabla f_2(x_{k+1}) + A^\top y_{k+1} 
\end{align*}
With this new definition of primal and dual residuals, the rest of Algorithm~\ref{alg:goldstein} can be kept identical.

\subsection{Residual balance for Tri-PD}

Goldstein et al's primal and dual residual balance algorithm can also be defined for Algorithm~\ref{alg:tri-pd}. 
Since
\begin{align*}
& 0 \in \tau_k \partial f(\bar x_{k+1}) + \tau_k \nabla f_2(x_k) + \bar x_{k+1} - x_k + \tau_k A^\top y_k\\
& 0 \in \sigma_k \partial g^*(y_{k+1}) + y_{k+1} - y_k - \sigma_k A \bar x_{k+1}
\end{align*}
we can choose
\begin{align*}
p_{k+1} = & \frac{1}{\tau_k} (x_k - x_{k+1}) +\nabla f_2(\bar x_{k+1}) - \nabla f_2(x_k) \in \partial f(\bar x_{k+1})+ \nabla f_2(\bar x_{k+1}) + A^\top \bar y_{k+1}  \\
d_{k+1} = & \frac{1}{\sigma_k} (y_k - y_{k+1}) \in \partial g^*(\bar y_{k+1}) - A \bar x_{k+1}
\end{align*}
where we use the identities $x_{k+1} = \bar x_{k+1} - \tau_k A^\top (y_{k+1} - y_k)$
and $y_{k+1} = \bar y_{k+1}$. Hence $(p_{k+1}, d_{k+1}) \in F(\bar z_{k+1})$ and we can proceed with the primal and dual residual balance method. Note that in the case $f_2 \neq 0$, we need to compute $\nabla f_2(\bar x_{k+1})$, whereas when using Algorithm~\ref{alg:vu-condat}, all the quantities involved in the formula of the residuals are already required in order to run the algorithm.

\section{Quadratic case}
\label{sec:quadratic}

\subsection{Minimization of the spectral radius}

If the Lagrangian function is a quadratic function
\[
L(x,y) = \frac 12 x^\top Q x + c^\top x + y^\top A x - b^\top y - \frac 12 y^\top S y
\]
then the saddle point problem is equivalent to the resolution of a linear system of equations and PDHG can be written as
\begin{align*}
&y_{k+1} = (I + \sigma S)^{-1}(y_k + \sigma (A x_k - b)) \\
&x_{k+1} = (I + \tau Q)^{-1} (x_k - \tau (A^\top (2 y_{k+1} - y_k )+c)) \\
& \qquad \qquad = (I + \tau Q)^{-1} \Big(x_k - \tau \big(A^\top (2 (I + \sigma S)^{-1}[y_k + \sigma (A x_k - b)] - y_k )+c\big)\Big)
\end{align*}

Hence the algorithm is the fixed point algorithm
\[
z_{k+1} = R z_k + d
\]
where the matrix $R$ is given blockwise by
\begin{equation}
R = \begin{bmatrix}
(I+\tau Q)^{-1} (I - 2 \tau \sigma A^\top (I+\sigma S)^{-1} A) &
\tau (I+\tau Q)^{-1} A^\top (I - 2 (I+\sigma S)^{-1}) \\
\sigma (I+ \sigma S)^{-1} A & (I+ \sigma S)^{-1}
\end{bmatrix}
\label{eq:Rmatrix}
\end{equation}
and the vector $d$ is
\begin{equation}
d = \begin{bmatrix} -\tau (I+ \tau Q)^{-1} (-2 \sigma A^\top (I + \sigma S)^{-1}  b + c) \\
-\sigma (I + \sigma S)^{-1} b \end{bmatrix}
\end{equation}

The speed of convergence of the algorithm is governed by the spectral radius of the matrix $R$. In order to get a faster algorithm, we need to find step sizes $\tau$ and $\sigma$ that minimize this spectral radius. Even though this procedure is more costly than solving the original problem, it will consist in a goal for what can be expected from good step sizes.

\subsection{Estimation of the spectral radius using the power method}

\subsubsection{Unique principal eigenvalue}

If the spectral radius $|\lambda_1|$ of a matrix $B$ is supported by one single eigenvalue, then the power iteration method 
$x_{k+1} = B x_k$ satisfies $|\lambda_1| = \lim_{k \to +\infty} \frac{\|x_{k+1}\|}{\|x_k\|}$, where the result is true for any norm~\cite{hager2021applied}.
Note that when running PDHG on a quadratic problem, we have $z_{k+1} - z_k = R(z_k - z_{k-1})$. Hence, PDHG is producing a power sequence on $z_{k+1} - z_k$. Moreover, if 1 is an eigenvalue of $R$ (which may happen if $d=0$), this procedure automatically discards the associated eigenvectors.
We propose to use of the norm $\|\cdot\|_V$, for which nonexpansiveness is proved. This implies that the estimate of the spectral radius given by 
\[
\frac{\|z_{k+1} - z_k\|_V}{\|z_k - z_{k-1}\|_V} = |\lambda_1|\bigg(1+ O\Big(\Big(\frac{\lambda_2}{\lambda_1}\Big)^k\Big)\bigg)
\]
will always be smaller than 1. 
As we can see in Figure~\ref{fig:compareVand2norms}, this leads to much more stable estimates of the spectral radius, especially when the assumption of a single principal eigenvector is not true, a case that will be treated below.

\begin{figure}
	\centering 
	
\includegraphics[width=0.5\linewidth]{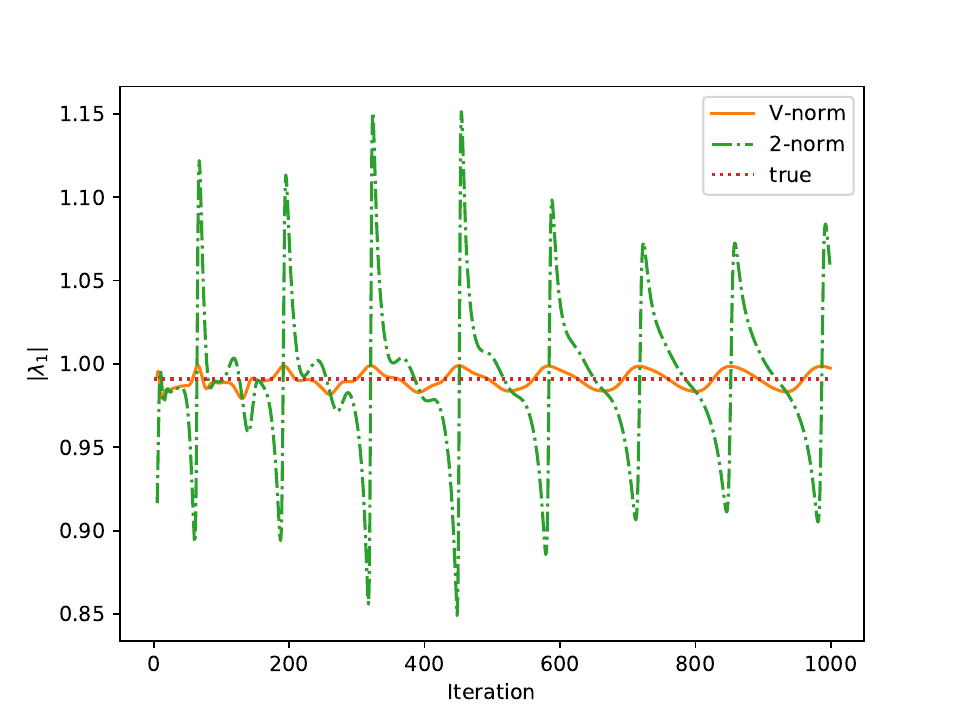}
\caption{
Comparison of norms for the computation of spectral radius estimates.
The problem under consideration is a least squares problem $\min_x \max_y \mu_x/2 ||x||^2 + \langle Ax, y \rangle - \mu_y/2 ||y||^2$ where each line of $A$ is such that $A_i x = (1+\eta) x_i - x_{i+1}$. We took $\mu_x = 0.01$, $\mu_y = 0.1$ and $\eta = 0.001$. The step sizes are constant with $\tau = 10 / \|A\|$. 
We can see that using the norm for which nonexpansiveness is guaranteed reduces a lot the amplitude of oscillations.
}
\label{fig:compareVand2norms}
\end{figure}

\subsubsection{Conjugate pair of principal eigenvalues}

When the spectral radius is supported by a conjugate pair of eigenvalues, the ratio $\frac{\|z_{k+1} - z_k\|_V}{\|z_k - z_{k-1}\|_V}$ may not converge.
As explained in~\cite{hager2021applied}, we can still use the power sequence to determine several eigenvalues, by a kind of Krylov method using $x_k$, $A x_k$ and $A^2 x_k$ instead of just $x_k$ and $A x_k$. However, this is quite sensitive to the actual number of principal eigenvalues: searching for a conjugate pair when $|\lambda_1| > |\lambda_2| = |\lambda_3|$ will lead to numerical issues. Moreover, there is no reason for the estimates to be always smaller than 1, even if they will be asymptotically.


We propose here a method to estimate the absolute value of a conjugate pair of principal eigenvalues using quantities computed using $\|\cdot\|_V$.

\begin{proposition}
Suppose that the spectral radius of a matrix $R$ is supported by a pair of conjugate eigenvalues $\lambda_1$ and $\lambda_2 = \bar \lambda_1$ associated with their (complex) eigenvectors $\zeta_1$ and $\zeta_2 = \bar \zeta_1$. We assume that $|\lambda_1| = |\bar \lambda_1| > |\lambda_j|$ for all $j \geq 3$.
Let $(u_k)$ be a sequence defined by $u_{k+1} = R u_k$. Then for any Hilbertian norm,
\[
\frac{\|u_{k+1}\|^2}{\|u_k\|^2} = \phi(k) + O\Big( (\lambda_3/\lambda_4)^k\Big)
\]
where the function $\phi$ is defined by $\phi(x) = |\lambda_1|^2 \Big(1 + b\frac{\cos(2 (x+1) \theta_1 + \varphi) - \cos(2 x \theta_1 + \varphi)}{a + b \cos(2 x \theta_1 + \varphi)} \Big)$, $\theta_1$ is the argument of $\lambda_1$ and $\varphi$, $a$ and $b$ are real numbers.

$\phi$ is periodic and, denoting $\underline x$ and $\bar x$ one of its minimum and maximum, we recover the spectral radius by $|\lambda_1|= \sqrt{\phi(\frac{\underline x + \bar x}{2})}$.
\end{proposition}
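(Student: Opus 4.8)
The plan is to expand $u_k$ in the eigenbasis of $R$ and isolate the contribution of the dominant conjugate pair. Write $u_0 = \alpha_1 \zeta_1 + \alpha_2 \zeta_2 + \sum_{j \geq 3} \alpha_j \zeta_j$; since $\zeta_2 = \bar\zeta_1$ and $u_0$ is real we may take $\alpha_2 = \bar\alpha_1$. Then $u_k = \lambda_1^k \alpha_1 \zeta_1 + \bar\lambda_1^k \bar\alpha_1 \bar\zeta_1 + \sum_{j\geq 3} \lambda_j^k \alpha_j \zeta_j$. Writing $\lambda_1 = |\lambda_1| e^{i\theta_1}$ and $\alpha_1 \zeta_1 = w$ (a fixed complex vector), the leading part is $\lambda_1^k w + \bar\lambda_1^k \bar w = 2\,\mathrm{Re}(\lambda_1^k w)$, so $\|u_k\|^2 = 4\|\mathrm{Re}(\lambda_1^k w)\|^2 + (\text{cross terms with }\lambda_j,\,j\geq 3) + O(|\lambda_3|^{2k})$. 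The cross terms between the dominant pair and $\lambda_3$ are $O(|\lambda_1|^k |\lambda_3|^k)$, which after dividing $\|u_{k+1}\|^2/\|u_k\|^2$ by $|\lambda_1|^{2k}$ leaves an error of order $(|\lambda_3|/|\lambda_1|)^k$; here one should be a little careful, since the statement writes the error as $O((\lambda_3/\lambda_4)^k)$ — I would double-check whether that is a typo for $(\lambda_3/\lambda_1)^k$ or reflects a finer second-order cancellation, but in either case the error is geometric and vanishes, so the asymptotic form of $\phi$ is unaffected.

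Next I would compute the leading term explicitly. Using a Hilbertian (real inner product) norm, $4\|\mathrm{Re}(\lambda_1^k w)\|^2 = 2\|\lambda_1^k w\|^2_{\mathbb C} + 2\,\mathrm{Re}\big((\lambda_1^k w, \lambda_1^k w)\big)$ where $(\cdot,\cdot)$ denotes the bilinear (not sesquilinear) extension of the inner product. The first summand is $2|\lambda_1|^{2k}\|w\|^2$, a constant times $|\lambda_1|^{2k}$; the second is $2\,\mathrm{Re}(\lambda_1^{2k} (w,w)) = 2|\lambda_1|^{2k}\,\mathrm{Re}\big(e^{2ik\theta_1}(w,w)\big) = 2|\lambda_1|^{2k}|(w,w)|\cos(2k\theta_1 + \varphi)$ with $\varphi = \arg(w,w)$. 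Hence, up to the geometric error, $\|u_k\|^2 = |\lambda_1|^{2k}\big(a + b\cos(2k\theta_1 + \varphi)\big)$ with $a = 2\|w\|^2$ and $b = 2|(w,w)|$ (note $b \leq a$ by Cauchy–Schwarz). Taking the ratio,
\[
\frac{\|u_{k+1}\|^2}{\|u_k\|^2} = |\lambda_1|^2 \cdot \frac{a + b\cos(2(k+1)\theta_1 + \varphi)}{a + b\cos(2k\theta_1 + \varphi)} = |\lambda_1|^2\Big(1 + b\,\frac{\cos(2(k+1)\theta_1+\varphi) - \cos(2k\theta_1+\varphi)}{a + b\cos(2k\theta_1+\varphi)}\Big),
\]
which is exactly $\phi(k)$ as stated.

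Finally, for the recovery formula I would analyze $\phi$ as a function of the continuous variable $x$. Set $\psi(x) = a + b\cos(2x\theta_1 + \varphi)$, so $\phi(x) = |\lambda_1|^2 \psi(x+1)/\psi(x)$. Both $\psi(x)$ and $\psi(x+1)$ are sinusoids of the same frequency $2\theta_1$ but shifted phases; their ratio is a periodic function (period $\pi/\theta_1$), and I would argue that at a critical point of $\phi$ one has $\psi'(x+1)\psi(x) = \psi'(x)\psi(x+1)$. A clean way to see the midpoint claim: the product $\phi(\underline x)\phi(\bar x)$ — or better, observe that the geometric mean of the max and min of $\psi(x+1)/\psi(x)$ is $1$. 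Concretely, $\phi$ attains its extrema where $\tfrac{d}{dx}\log\psi(x+1) = \tfrac{d}{dx}\log\psi(x)$; writing $\psi(x) = a + b\cos(2\theta_1 x + \varphi)$ and shifting the phase origin, the extrema of $\psi(x+1)/\psi(x)$ occur symmetrically about the point where $\cos$ and its shift-by-one balance, and at that point $\psi(x+1)/\psi(x) = 1$, giving $\phi = |\lambda_1|^2$; the midpoint $(\underline x + \bar x)/2$ of a minimizer and the adjacent maximizer lands exactly there by the symmetry of $\cos$. I expect this last step — pinning down that the midpoint of a min and a max of $\phi$ is precisely where the ratio $\psi(x+1)/\psi(x)$ equals $1$ — to be the main obstacle; the cleanest route is probably to substitute $t = 2\theta_1 x$, use sum-to-product identities to write $\psi(x+1) - \psi(x)$ and $\psi(x+1) + \psi(x)$ as single cosines with a common phase shift of $\theta_1$, deduce that $\phi$ is a Möbius function of a single cosine, and read off that its level set $\{\phi = |\lambda_1|^2\}$ sits exactly halfway between consecutive extrema. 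Everything else is bookkeeping with the eigen-expansion and elementary trigonometry.
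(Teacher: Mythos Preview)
Your proposal is correct and follows essentially the same route as the paper: eigenbasis expansion, the identity $4\|\Re(\lambda_1^k w)\|^2 = 2\|w\|^2 + 2\Re(e^{2ik\theta_1}(w,w))$, and the resulting formula $\phi(k)=|\lambda_1|^2(a+b\cos(2(k{+}1)\theta_1+\varphi))/(a+b\cos(2k\theta_1+\varphi))$ are exactly what the paper does (your bilinear pairing $(w,w)$ is the paper's $\langle\alpha_0\zeta_1,\bar\alpha_0\bar\zeta_1\rangle$). You are also right that the error exponent should read $(\lambda_3/\lambda_1)^k$; the paper's own proof uses that form.

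The only methodological difference is in the midpoint step. The paper applies the sum-to-product identity to write $\phi(x)=|\lambda_1|^2\big(1-2b\sin\theta_1\sin((2x{+}1)\theta_1+\varphi)/(a+b\cos(2x\theta_1+\varphi))\big)$, then differentiates explicitly to find that extrema occur where $\cos((2x{+}1)\theta_1+\varphi)=-b\cos\theta_1/a$; two such points with equal cosine have a midpoint where the sine vanishes, giving $\phi=|\lambda_1|^2$. Your symmetry sketch reaches the same endpoint and can be made precise cleanly: with $u=(2x{+}1)\theta_1+\varphi$, one has $\psi(x)=a+b\cos\theta_1\cos u+b\sin\theta_1\sin u$ and $\psi(x{+}1)=a+b\cos\theta_1\cos u-b\sin\theta_1\sin u$, so $u\mapsto -u$ sends the ratio to its reciprocal, forcing min and max to be symmetric about $\sin u=0$. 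Either argument works; the paper's explicit derivative is more pedestrian, your symmetry observation is shorter once the change of variable to $u$ is made.
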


\begin{proof}
We write $u_0$ in the base of eigenvectors of $R$:
\[
u_0 = \sum_{j=1}^{m+n} \alpha_j \zeta_j
\]
Since $R \zeta_i = \lambda_i \zeta_i$, we have $u_k = \sum_{i=1}^{m+n} \alpha_i \lambda_i^k \zeta_i$. Denote $\lambda_1 = |\lambda_1| \exp(i \theta_1)$. Then
\begin{align*}
&u_k = |\lambda_1|^k \Big(2 \Re(\alpha_0 \exp(i k \theta_1) \zeta_1) + \sum_{j=3}^{m+n} O\Big(\Big(\frac{\lambda_j}{\lambda_1}\Big)^k\Big) \Big) \\
&\|u_k\| = |\lambda_1|^k \Big(2 \|\Re(\alpha_0 \exp(i k \theta_1) \zeta_1)\| + O\Big(\Big(\frac{\lambda_3}{\lambda_1}\Big)^k\Big)\Big)
\end{align*}

Note that 
\begin{align*}
4 \|\Re(\alpha_0 &\exp(i k \theta_1) \zeta_1)\|^2 = \langle \alpha_0 \exp(i k \theta_1) \zeta_1 + \bar\alpha_0 \exp(-i k \theta_1) \bar\zeta_1, \alpha_0 \exp(i k \theta_1) \zeta_1 + \bar\alpha_0 \exp(-i k \theta_1) \bar\zeta_1\rangle\\
& = \| \alpha_0 \exp(i k \theta_1) \zeta_1\|^2 + \|\bar\alpha_0 \exp(-i k \theta_1) \bar\zeta_1\|^2 + 2 \Re (\langle \alpha_0 \exp(i k \theta_1) \zeta_1, \bar\alpha_0 \exp(-i k \theta_1) \bar\zeta_1 \rangle) \\
& = 2 \| \alpha_0 \zeta_1\|^2 + 2 \Re(\exp(2 i k \theta_1) \langle \alpha_0 \zeta_1, \bar\alpha_0\bar\zeta_1 \rangle) 
\end{align*}

We now consider the function 
\[
\phi(k) = |\lambda_1|^2 \frac{||\alpha_0 \zeta_1||^2 + |\langle \alpha_0 \zeta_1, \bar \alpha_0 \bar \zeta_1\rangle| \cos(2 (k+1) \theta_1 + \varphi)}{||\alpha_0 \zeta_1||^2 + |\langle \alpha_0 \zeta_1, \bar \alpha_0 \bar \zeta_1\rangle| \cos(2 k \theta_1 + \varphi)} = 
|\lambda_1|^2 \frac{a + b \cos(2(k+1) \theta_1 + \varphi)}{a + b \cos(2 k \theta_1 + \varphi)}\]
where $\varphi$ is the argument of $\langle \alpha_0 \zeta_1, \bar \alpha_0 \bar \zeta_1\rangle$ and $a$ and $b$ are two nonnegative real numbers.
We have 
\[
\frac{||u_{k+1}||^2}{||u_{k}||^2} = \phi(k) + O\Big((\lambda_3/\lambda_1)^k \Big) \;.
\]
Extending $\phi$ to the set of real numbers, we can see that it is a periodic function with period $\frac{\pi}{\theta_1}$. Moreover
\begin{align*}
\phi(x) &= |\lambda_1|^2 \Big(1 + b\frac{\cos(2 (x+1) \theta_1 + \varphi) - \cos(2 x \theta_1 + \varphi)}{a + b \cos(2 x \theta_1 + \varphi)} \Big) \\
&= |\lambda_1|^2 \Big(1 - 2 b\frac{\sin((2 x+1) \theta_1 + \varphi)\sin(\theta_1)}{a + b \cos(2 x \theta_1 + \varphi)} \Big) \;.
\end{align*}
where we used the formula $\cos(a) - \cos(b) = - 2 \sin(\frac{a+b}{2})\sin(\frac{a-b}{2})$.
Differentiating $\phi$ with respect to $x$, we get:
\begin{align*}
\phi'(x) &= \frac{-2b\sin(\theta_1)|\lambda_1|^2}{(a + b \cos(2 x \theta_1 + \varphi))^2}\Big(2\theta_1 \cos((2x+1)\theta_1 + \varphi) (a+b\cos(2x\theta_1+\varphi)) \\
& \qquad \qquad+ 2b\theta_1 \sin((2x+1)\theta_1+\varphi) \sin(2x\theta_1+\varphi)\Big) \\
& = \frac{-4b\theta_1\sin(\theta_1)|\lambda_1|^2}{(a + b \cos(2 x \theta_1 + \varphi))^2}\Big(a \cos((2x+1)\theta_1 + \varphi) + b \cos(\theta_1)\Big)
\end{align*}
We can see that the maximum and minimum are attained when $\cos(2x\theta_1 + \theta_1+ \varphi) = -\frac {b \cos(\theta_1)}{a }$. 
Let $\underline{x}$ be a minimizer and $\bar x$ a maximizer of $\phi$. 
These are two different numbers such that $\cos(2\underline x\theta_1 + \theta_1+ \varphi)=\cos(2\bar x\theta_1 + \theta_1+ \varphi)$.
Hence, $x^* = \frac{\underline{x}+ \bar x}{2}$ satisfies $\sin(2x\theta_1 + \theta_1+ \varphi) = 0$ and so $\phi(x^*) = |\lambda_1|^2$.
\end{proof}

If we denote $u_k = z_{k+1} - z_k$, since $u_{k+1} = R u_k$ with $R$ defined in~\eqref{eq:Rmatrix}, we can apply the proposition to PDHG and get the following algorithm to estimate for $|\lambda_1|$ with precision $\hat{|\lambda_1|} = |\lambda_1| + O(\min(\theta_1+(\lambda_3/\lambda_1)^{k}, (\lambda_3/\lambda_1)^{k-\pi/\theta_1}))$:
\begin{algorithm}
\begin{algorithmic}
\State Set $\delta = 0.6$, $\epsilon_1 = 10^{-3}$ and $\epsilon_2 = 10^{-5}$
\State Monitor $r_k = \frac{ \|z_{k+1} - z_{k}\|_{V_s}}{ \|z_{k} - z_{k-1}\|_{V_s}}$ \If{$\|z_{k+1} - z_{k}\|_{V_s} \leq \delta \|z_{s} - z_{s-1}\|_{V_{s-1}}$ and $|(1-r_{k+1})/(1-r_k)  -1| \leq \epsilon_1$ and $|r_{k+1}-2r_k + r_{k-1}|/(1-r_k)^2 \leq \epsilon_2$}
\State Return $|\hat\lambda_1| = r_k = \frac{||z_{k+1}-z_k||_{V_s}}{||z_k-z_{k-1}||_{V_s}}$
\EndIf
\If{$\|z_{k+1} - z_{k}\|_{V_s} \leq \delta \|z_{s} - z_{s-1}\|_{V_{s-1}}$ and a local minimum and a local maximum are visible, respectively at $\underline k$ and $\bar k$}
\State Return $\hat{|\lambda_1|} = r_{k^*}$, where $k^* = \lceil (\underline k + \bar k) / 2 \rceil$.
\Else
\State Continue
\EndIf
\end{algorithmic}
\caption{Algorithm to estimate $|\lambda_1|$}
\label{alg:residual_norm_cycles}
\end{algorithm}

We can make the estimation a bit more precise by fitting a quadratic around the local extrema and a linear function around the point where we think $\sin(2x\theta_1 + \theta_1+ \varphi) = 0$.
We get a precision $O(\theta_1 + (\lambda_3/\lambda_1)^{k})$ if no period has been fully observed (case $\theta_1 = 0$ or $\theta_1 \ll 1$) and we get a precision $O((\lambda_3/\lambda_1)^{k - \pi/\theta_1})$ when we have observed a full period.

\begin{figure}
	\centering
\includegraphics[width=0.49\linewidth]{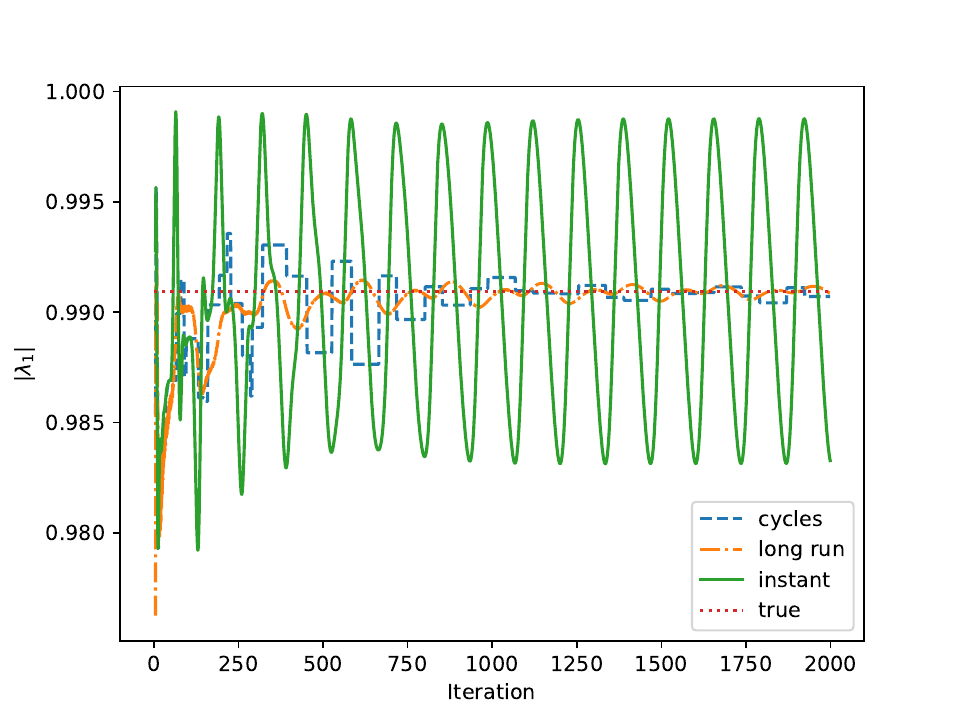}
\includegraphics[width=0.49\linewidth]{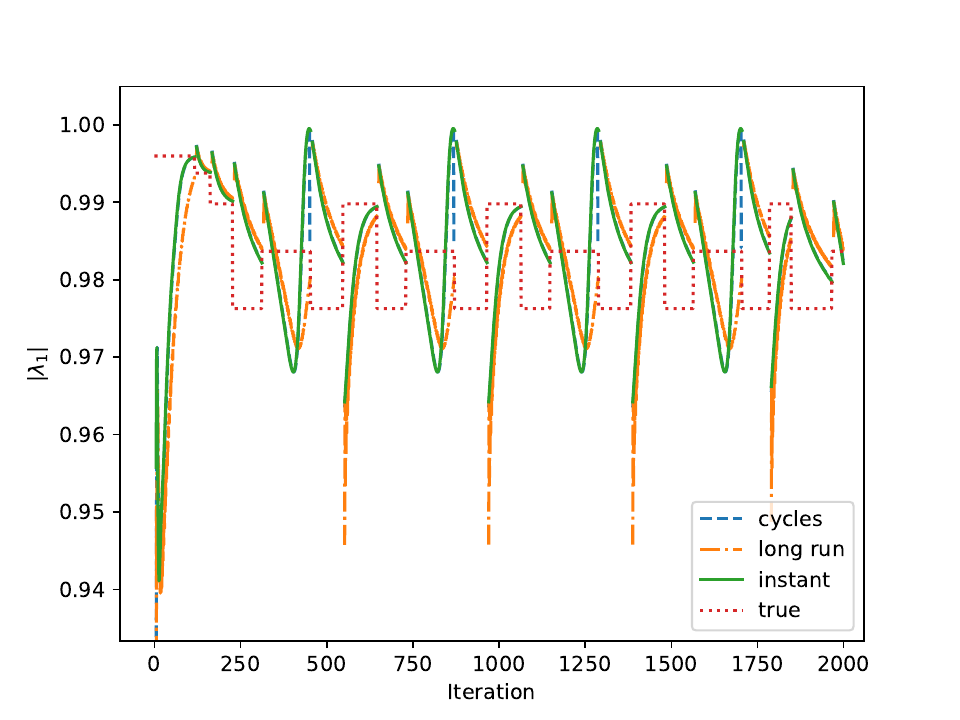}
\caption{Comparison of several estimates of the spectral radius. The true value is in dotted red line, the instantaneous estimate $||u_{k+1}||_V/||u_k||_V$ is in solid green line, the long run estimate $(||u_{k}||_V/||u_{k-s}||_V)^{1/s}$ is in orange dash-dotted line and the estimate proposed in this paper based on the study of cycles is in blue dashed line. The problem under consideration is the same as for Figure~\ref{fig:compareVand2norms}. On the left plot, the step sizes are constant with $\tau = 10 / \|A\|$. The instantaneous estimate fails because it oscillates. One can remark that the oscillations are far from being negligible. The long run and cycle based estimates behave similarly in this context. On the right plot, $\tau$ is modified online by monitoring the convergence rate, starting from $\tau_0 = 100 / \|A\|$, as will be explained in Section~\ref{sec:adap_fercoq}.}
\end{figure}
\begin{figure}	\includegraphics[width=0.5\linewidth]{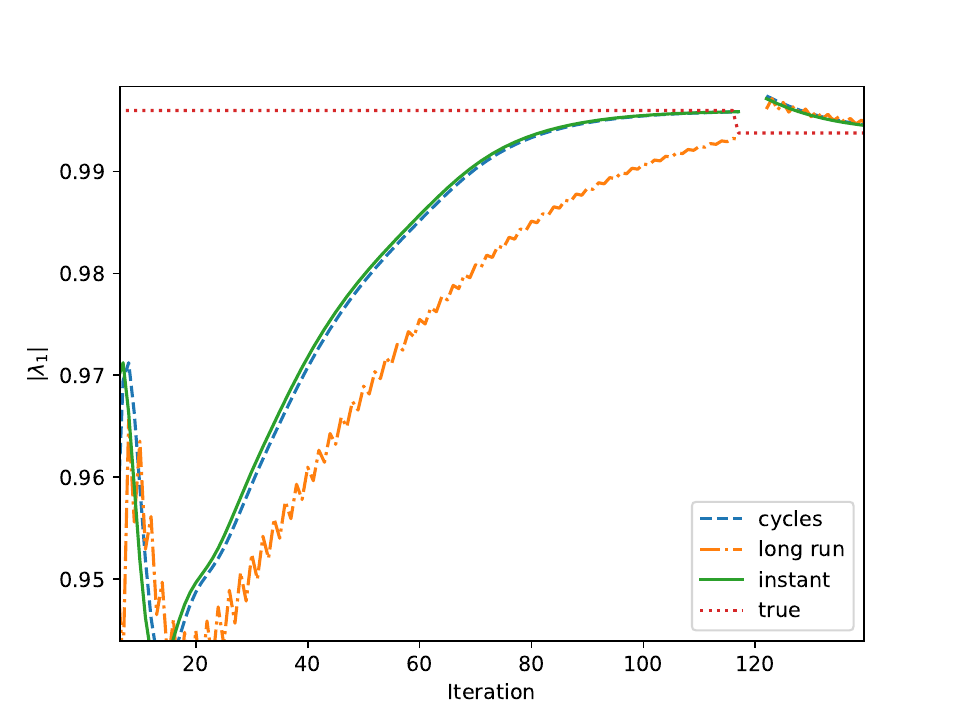}
\includegraphics[width=0.5\linewidth]{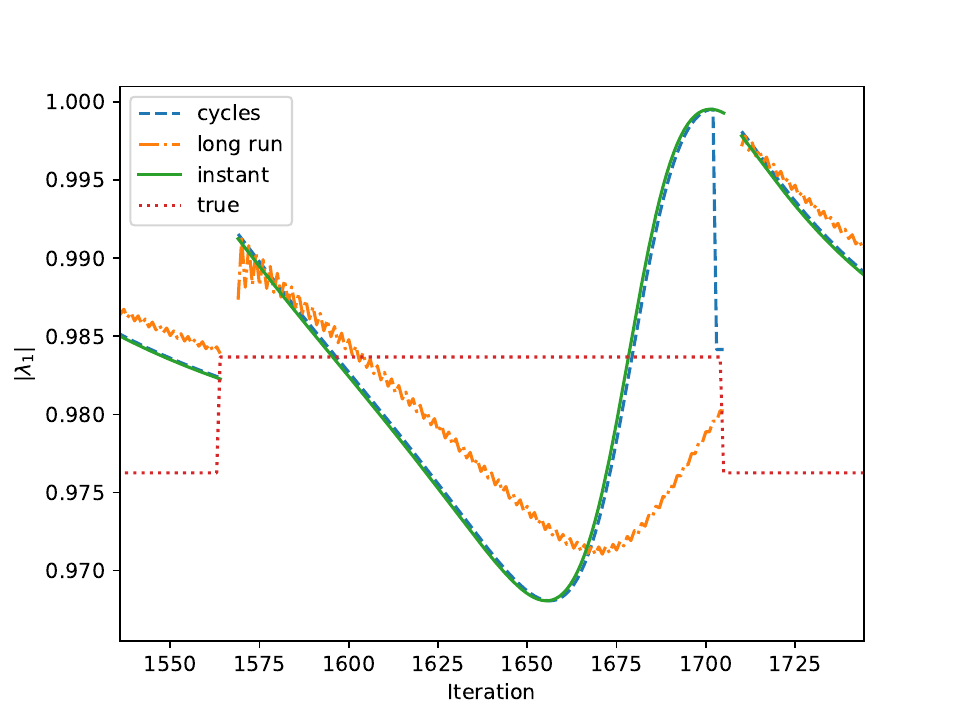}
\caption{Zooms on two cases for rate estimation. 
When the matrix is changing, we may encounter either the case where the principal eigenvalue is unique (left plot) or the case where there is a conjugate pain of principal eigenvalues. In both cases, the cycle based estimate is the most accurate, and is able to take profit of warm start when we modify the step sizes. Hence one single cycle is enough to get a precision allowing us to discriminate between a better and a worse rate.}
\end{figure}

\subsection{Goldstein warm-up}

If the step sizes are very far from the optimum, estimating the rate is very expensive because the algorithm is very slow.
In the other hand, balancing the residuals can be done at each iteration without waiting for glimpses of convergence.
Hence, we propose to combine Goldstein's adaptive steps~\cite{goldstein2015adaptive} with our step size adaptation based on rate estimation.
Moreover, we can use the automatic stopping of Goldstein's adaptive steps, implemented by the geometric decrease of $\alpha_k$, to switch to rate estimation only when primal-dual oscillations are encountered.

\subsection{The adaptive algorithm}

\label{sec:adap_fercoq}

Our proposed adaptive step size algorithm for PDHG is given in Algorithm~\ref{alg:adap_fercoq}.

\begin{algorithm}
	\begin{algorithmic}
		\State $z_0 \in \mathbb R^{n+m}$, $\tau_0, \sigma_0$ such that $\tau_0 \sigma_0 < \|A\|^2$, $u_0 = 1$, $\rho_{\text{est}}^0 = 1$, $\delta = 0.6$, $r = 1.5$, $\alpha_0 = 0.5$, $l=0$, $s_{-1} = s_0=0$
		\For{$k \in \mathbb N$}
		\State $z_{k+1} = \mathrm{PDHG}(z_k, \tau_s, \sigma_s)$  \hfill {\em Run one iteration of PDHG}
		\State $(\bar \tau, \bar \sigma, \bar \alpha) = \textrm{Goldstein}(z_k, z_{k+1}, \tau_s, \sigma_s, \alpha_s)$  \hfill {\em Try Goldstein et al.'s step size adaptation}
		\If{$\bar \tau \neq \tau_s$}
		\State $s = k+1$, $\alpha_{s+1} = \bar \alpha$   \hfill {\em When $\alpha_s$ becomes too small, we skip Goldstein et al.'s rule}
		\State $\tau_s = \bar \tau$, $\sigma_s = \bar \sigma$ 
		\EndIf
		\State Compute $|\hat \lambda_1|(k)$ using Algorithm~\ref{alg:residual_norm_cycles} on iterates $\{s, \ldots, k+1\}$\hfill  {\em Estimate convergence rate}
		\If{Algorithm~\ref{alg:residual_norm_cycles} has returned a value}
		\If{$|\hat \lambda_1|(k) < |\hat \lambda_1|(s)$}
		\State $u_{k+1} = -u_{s}$  \hfill  {\em Revert gear}
		\EndIf
		\State $\tau_{k+1} = \tau_{s} r^{u_{k+1}}$  \hfill  {\em Update step sizes}
		\State $\sigma_{k+1} = \sigma_{s} r^{-{u_{k+1}}}$
		\State $s = k+1$
		\EndIf
		\EndFor
	\end{algorithmic}
	\caption{Adaptive stepsizes for PDHG based on rate estimate with Goldstein warm-up}
	\label{alg:adap_fercoq}
\end{algorithm}

\begin{figure}
\includegraphics[width=0.49\linewidth]{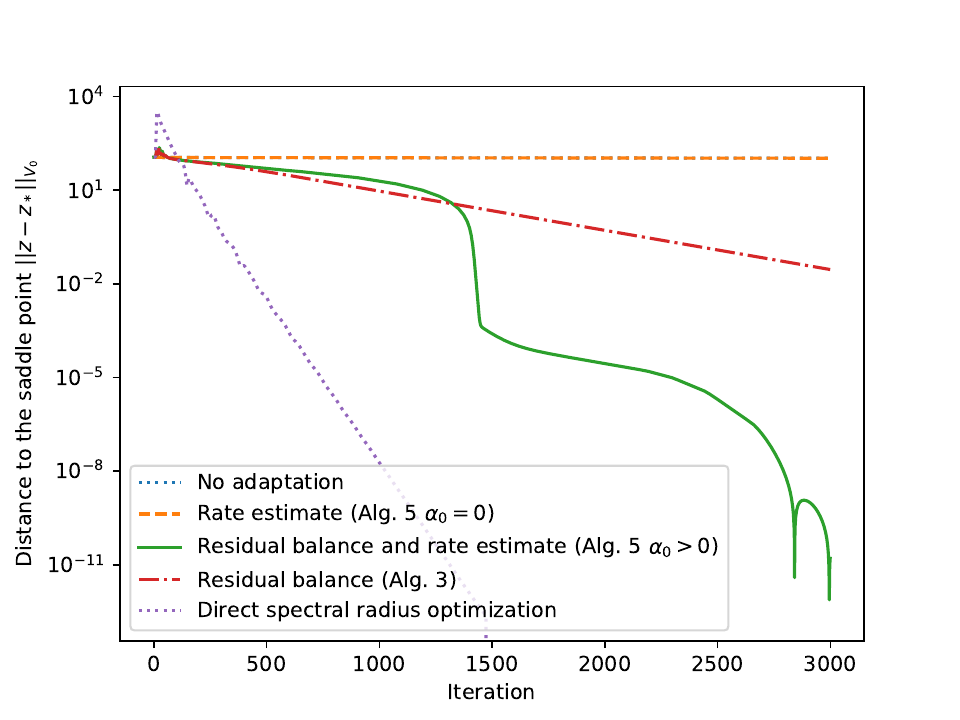}
\includegraphics[width=0.49\linewidth]{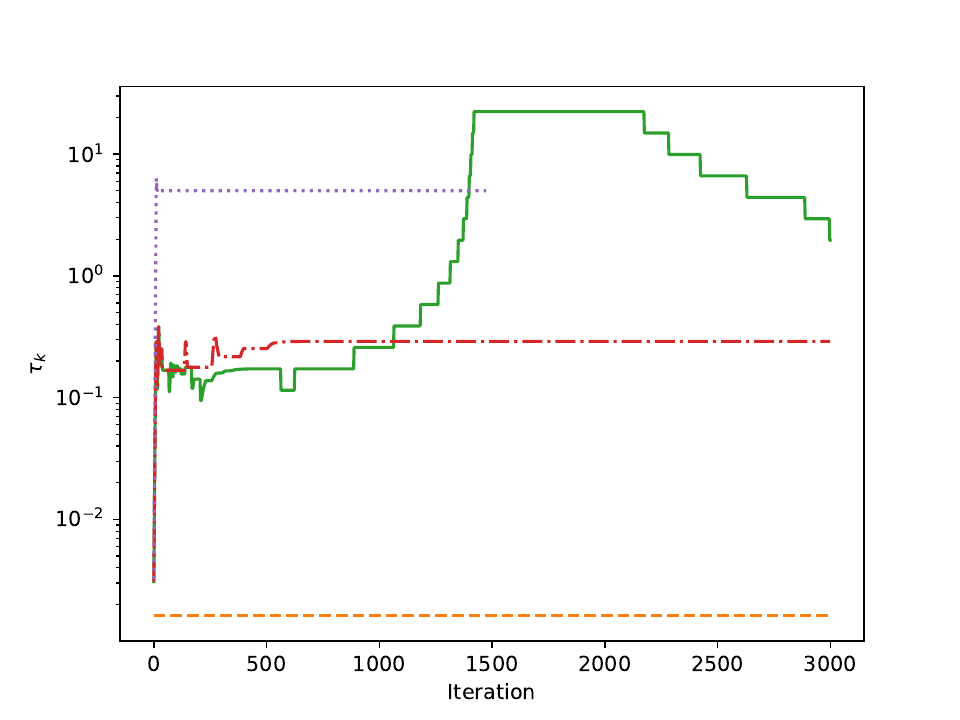}
\caption{Comparison of adaptive step algorithms on the toy quadratic problem of Figure \ref{fig:compareVand2norms}, where we initialized $\tau_0 = \frac{0.01}{\|A\|}$. Left: distance to the saddle point. Right: value of $\tau_k$ for each iteration (same line colors). We had chosen an initial step size value which is far from the optimal one. Thus the base algorithm is quite slow. Moreover, when trying to estimate the rate, we need many iteration before being able to discriminate between two slow rates. Residual balance yields a very quick update of the step sizes to something reasonable. We can see an actual decrease on the left plot. However, the algorithm is not able to deal with the oscillating behavior of the residuals: it quickly drains its updating budget and stalls. Combining both methods gives the solid green line. Residual balance gives a sufficiently good step size allow accurate rate estimates. After a few updates, we obtain a rate nearly as good as what can be obtained if we directly optimize the spectral radius using 0th-order optimization.}
\label{fig:toy_problem}
\end{figure}

In order to prove the convergence of the adaptive algorithm, we shall use the following result.
\begin{lemma}[Lemma~1 in \cite{tran2020adaptive}]
Suppose that $g$ is $\hat M_g$-Lipschitz on its domain (which means it is the sum of a Lipschitz continuous function and the indicator of a convex set). Let $z_* = (x_*, y_*)$ be a saddle point of \eqref{pb:saddle_point}. Define the smoothed gap as 
\begin{multline}
G_{\beta}(z, \dot z) = \max_{x' \in \mathbb R^n, y' \in \mathbb R^m} f(x) + f_2(x) + \langle Ax, y'\rangle - g^*(y') - \frac{\beta_x}{2}\|y' - \dot y\|^2 \\- f(x') - f_2(x') - \langle Ax', y\rangle + g^*(y) - \frac{\beta_y}{2}\|x'-\dot x\|^2
\label{eq:smoothed_gap}
\end{multline}
and $S_\beta(x, \dot y) = G_{(\beta, \beta)}((x, y_*), (x_*, \dot y))$.
Define also $w$ as the projection of $Ax$ on $\dom g$ and $P^* = f(x_*) + f_2(x_*) + g(Ax_*)$. Then we have
\begin{align*}
&f(x) + f_2(x) + g(w) - P^* \geq - \|y_*\| \dist(Ax, \dom g^*) \\
&f(x) + f_2(x) + g(w) - P^* \leq S_{\beta}(x, \dot y) + \beta(2 \hat M_g + \|\dot y\|) \Big(\|\dot y - y_*\| + \sqrt{\|\dot y - y_*\|^2 + \frac{2}{\beta} S_\beta(x, \dot y)} \Big) \\
& \dist(Ax, \dom g) \leq \beta  \Big(\|\dot y - y_*\| + \sqrt{\|\dot y - y_*\|^2 + \frac{2}{\beta} S_\beta(x, \dot y)} \Big)
\end{align*}
\label{lem:smoothed_gap_and_optimality}
\end{lemma}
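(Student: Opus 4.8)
The plan is to peel $S_\beta(x,\dot y)$ down to a transparent form and then absorb everything into two elementary error terms. Since the inner maximization defining $G_{(\beta,\beta)}\big((x,y_*),(x_*,\dot y)\big)$ decouples in $x'$ and $y'$, I would first carry out the $x'$-part: the map $x'\mapsto f(x')+f_2(x')+\langle Ax',y_*\rangle+\tfrac\beta2\|x'-x_*\|^2$ is $\beta$-strongly convex, and the saddle-point condition $0\in\partial f(x_*)+\nabla f_2(x_*)+A^\top y_*$ forces it to be minimized at $x_*$; together with the Fenchel equality $\langle Ax_*,y_*\rangle=g(Ax_*)+g^*(y_*)$ (valid at a saddle point) this collapses the whole expression to
\[
S_\beta(x,\dot y)=f(x)+f_2(x)-P^*+E_\beta(Ax;\dot y),
\]
where $E_\beta(u;\dot y):=\max_{y'}\big(\langle u,y'\rangle-g^*(y')-\tfrac\beta2\|y'-\dot y\|^2\big)$ is a smoothed biconjugate of $g$. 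In particular $f(x)+f_2(x)+g(w)-P^*=S_\beta(x,\dot y)+\big(g(w)-E_\beta(Ax;\dot y)\big)$, so the whole lemma reduces to bounding $g(w)-E_\beta(Ax;\dot y)$ and $\dist(Ax,\dom g)$.

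For the lower bound I would simply combine $P^*\le f(x)+f_2(x)+\langle Ax,y_*\rangle-g^*(y_*)$ (the saddle-point inequality $L(\cdot,y_*)\ge P^*$) with Fenchel--Young $g(w)+g^*(y_*)\ge\langle w,y_*\rangle$, obtaining $f(x)+f_2(x)+g(w)-P^*\ge\langle w-Ax,y_*\rangle\ge-\|y_*\|\,\dist(Ax,\dom g)$, since $\|w-Ax\|=\dist(Ax,\dom g)$.

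For the two upper bounds I would introduce the maximizer $y_+$ of $E_\beta(Ax;\dot y)$ and the associated point $v_+:=Ax-\beta(y_+-\dot y)$, which satisfies $y_+\in\partial g(v_+)$, hence $v_+\in\dom g$, and $\|Ax-v_+\|=\beta\|y_+-\dot y\|$. Then two ingredients finish the proof. First, a strong-convexity estimate: since $y'\mapsto g^*(y')+\tfrac\beta2\|y'-\dot y\|^2-\langle Ax,y'\rangle$ is $\beta$-strongly convex with minimizer $y_+$, evaluating its strong-convexity inequality at the test point $y_*$ and using $L(x,y_*)\ge P^*$ once more gives $\tfrac\beta2\|y_+-y_*\|^2\le S_\beta(x,\dot y)+\tfrac\beta2\|\dot y-y_*\|^2$; this simultaneously shows that $\|\dot y-y_*\|^2+\tfrac2\beta S_\beta(x,\dot y)\ge 0$ (so the square roots in the statement are legitimate) and, by the triangle inequality, $\|y_+-\dot y\|\le\|\dot y-y_*\|+\sqrt{\|\dot y-y_*\|^2+\tfrac2\beta S_\beta(x,\dot y)}$. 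Second, a Lipschitz estimate: rewriting $E_\beta(u;\dot y)=\min_v\big(g(v)+\langle u-v,\dot y\rangle+\tfrac1{2\beta}\|u-v\|^2\big)$ (an infimal convolution, with minimizer $v_+$) yields $g(w)-E_\beta(Ax;\dot y)\le\hat M_g\|w-v_+\|+\|\dot y\|\,\|Ax-v_+\|$, and since $v_+\in\dom g$ gives $\|w-v_+\|\le\dist(Ax,\dom g)+\|Ax-v_+\|\le 2\|Ax-v_+\|$, this is at most $(2\hat M_g+\|\dot y\|)\,\|Ax-v_+\|=\beta(2\hat M_g+\|\dot y\|)\,\|y_+-\dot y\|$.

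Combining the two ingredients with the decomposition of the first paragraph yields the second inequality, and combining $\dist(Ax,\dom g)\le\|Ax-v_+\|=\beta\|y_+-\dot y\|$ with the first ingredient yields the third. The step I expect to be the real obstacle is the strong-convexity estimate: one must choose $y_*$ as the test point, recognize that $S_\beta$ is \emph{exactly} the (shifted) residual of $E_\beta$ at $y_*$, and then exploit the sign of $L(x,y_*)-P^*$ to discard a leftover term — extracting in one shot both the bound on $\|y_+-y_*\|$ and the nonnegativity of the quantity under the square root. A secondary, more pedestrian point is handling $\dom g$ carefully (taking it closed, so that $w$, $v_+$ and the segment joining them all lie in $\dom g$, where the $\hat M_g$-Lipschitz estimate is available).
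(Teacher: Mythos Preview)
The paper does not prove this lemma; it is stated with a citation to \cite{tran2020adaptive} and used as a black box, so there is no ``paper's own proof'' to compare against. Your argument is self-contained and correct: the decoupling of $S_\beta$ into $f(x)+f_2(x)-P^*+E_\beta(Ax;\dot y)$ via the saddle-point optimality of $x_*$ and the Fenchel equality $\langle Ax_*,y_*\rangle=g(Ax_*)+g^*(y_*)$ is exactly right, and the two ingredients you isolate (the strong-convexity bound on $\|y_+-y_*\|$ obtained by testing at $y_*$ and absorbing $L(x,y_*)-P^*\ge 0$, and the infimal-convolution rewriting $E_\beta(u;\dot y)=\min_v\{g(v)+\langle u-v,\dot y\rangle+\tfrac{1}{2\beta}\|u-v\|^2\}$ with minimizer $v_+=Ax-\beta(y_+-\dot y)$) deliver the three inequalities cleanly.

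Two minor remarks. First, the statement as printed has $\dist(Ax,\dom g^*)$ in the lower bound, which is almost certainly a typo for $\dist(Ax,\dom g)$; your derivation produces the latter, which is the meaningful quantity. Second, your caveat about $\dom g$ being closed is well placed: the hypothesis ``$g$ is $\hat M_g$-Lipschitz on its domain'' is interpreted in the paper as $g$ being an indicator of a convex set plus a Lipschitz function, and for $w=\proj_{\dom g}(Ax)$ to exist one tacitly assumes this set is closed; once that is granted, $w,v_+\in\dom g$ and the $\hat M_g$-Lipschitz bound between them is legitimate.
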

This lemma shows that if $\dot y$ is close to $y_*$ and $ S_\beta(x, \dot y)$ is small, then the feasibility and optimality gaps are both small.
We shall now prove the convergence theorem for Algorithm \ref{alg:adap_fercoq}. We will do the proof using Algorithm~\ref{alg:tri-pd} version of PDHG but one can also use similar arguments for Algorithm~\ref{alg:vu-condat}.

\begin{theorem}
Let $(z_k)$ be the sequence generated by Algorithm \ref{alg:adap_fercoq} with $\alpha_0 = 0$.
If we update the residuals when the squared residuals have decreased by at least a factor $\delta$ and multiply them by a factor $r$ and $1/r$ respectively where $r > 1$ and $\delta r < 1$, then the feasibility and optimality gaps defined in Lemma~\ref{lem:smoothed_gap_and_optimality} converge to 0.
\label{thm:convergence}
\end{theorem}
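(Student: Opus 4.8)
The goal is to show that adaptive step size changes of the prescribed magnitude do not destroy convergence. The overall strategy is a Lyapunov/quasi-Fejér argument built on the smoothed duality gap $S_\beta$, exploiting the averagedness of the Tri-PD operator $T_L$ in the norm $\|\cdot\|_{V_k}$. First I would set up notation: let $k_0 < k_1 < k_2 < \cdots$ be the iterations at which the step sizes are updated (i.e.\ the successive values of $s$ in Algorithm~\ref{alg:adap_fercoq}), and let $(\tau^{(j)}, \sigma^{(j)})$ be the step sizes on the block $\{k_j, \ldots, k_{j+1}-1\}$. On each such block the iteration is a fixed averaged operator, so by the standard Tri-PD analysis (\cite{latafat2018primal}, \cite{fercoq2022quadratic}) we get a one-step inequality of the form
\[
\|z_{k+1} - z_*\|_{V^{(j)}}^2 + (\text{gap terms}) \le \|z_k - z_*\|_{V^{(j)}}^2,
\]
valid for $k_j \le k < k_{j+1}$, where the gap terms are what Lemma~\ref{lem:smoothed_gap_and_optimality} needs to control feasibility and optimality. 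The key structural fact to record is the relation between $\|\cdot\|_{V^{(j)}}$ and $\|\cdot\|_{V^{(j+1)}}$: since $\tau^{(j+1)} = r^{\pm 1}\tau^{(j)}$ and $\sigma^{(j+1)} = r^{\mp 1}\sigma^{(j)}$, we have $\|z\|_{V^{(j+1)}}^2 \le r \|z\|_{V^{(j)}}^2$ (and the reverse with $1/r$), because $V_k$ for Tri-PD is the diagonal-type form $\frac1{\tau}\|x\|^2 + \frac1\sigma\|y\|^2$ and scaling one step size by $r$ and the other by $1/r$ changes the quadratic form by at most a factor $r$.

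**Why the trigger condition saves the day.** Naively, a factor-$r$ distortion of the metric at each of infinitely many updates would blow up the Lyapunov function. The crucial point — and this is why the theorem imposes ``update only when the squared residual has decreased by a factor $\delta$'' with $\delta r < 1$ — is that between consecutive updates the monitored quantity $\|z_{k+1}-z_k\|_{V}^2$ has genuinely contracted by $\delta$. I would leverage this together with the averagedness inequality, which also gives a bound of the form $\|z_{k_{j+1}} - z_*\|^2 \le \|z_{k_j} - z_*\|^2$ on the block, to show that the ``loss'' incurred at update $j$ (the factor $r$) is more than compensated by the ``gain'' $\delta$ accumulated over the block. Concretely, I would aim to build a telescoping estimate: either (a) define a rescaled Lyapunov sequence $\mathcal{E}_j = \prod_{i<j}(\delta r)^{?}\, \|z_{k_j}-z_*\|_{V^{(j)}}^2$ and show it is nonincreasing, or (b) more robustly, bound $\sum_j (r-1)\|z_{k_j} - z_*\|_{V^{(j)}}^2$ by a convergent series using the $\delta$-contraction of residuals, so that the perturbed Fejér inequality $\|z_{k+1}-z_*\|^2 \le \|z_k - z_*\|^2 + \varepsilon_k$ holds with $\sum_k \varepsilon_k < \infty$. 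Once I have a summable-perturbation Fejér inequality, boundedness of $(z_k)$ follows, hence $\dot y := y_*$-type quantities stay bounded, and the accumulated gap terms are summable, forcing $S_\beta(x_k, \dot y_k) \to 0$ along the sequence. Then Lemma~\ref{lem:smoothed_gap_and_optimality} converts this, together with boundedness of the iterates (which controls the $\|\dot y\|$ and $\|\dot y - y_*\|$ factors), into convergence of the feasibility gap $\dist(Ax_k, \dom g)$ and the optimality gap $f(x_k)+f_2(x_k)+g(w_k) - P^*$ to $0$.

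**The main obstacle.** The delicate part is making the bookkeeping in the previous paragraph airtight: one must verify that the number of iterations in block $j$ is large enough, relative to the monitored contraction, that the product over all updates of the metric-distortion factors does not diverge — equivalently, that ``one update per $\delta$-contraction'' is the right accounting. This is where the hypothesis $\delta r < 1$ is used: each block contributes a genuine multiplicative decrease $\delta$ to the residual while the metric is distorted by $r$, and $\delta r < 1$ guarantees net progress even in the worst case where every iteration triggers an update. A secondary technical nuisance is that the smoothed-gap one-step inequality must be stated in a step-size-robust way (the smoothing parameter $\beta$ and the reference point $\dot z$ interacting with the changing $V_k$), so I would either fix $\beta$ small once and for all or let it shrink slowly, and track the $\dot y$-dependence carefully through Lemma~\ref{lem:smoothed_gap_and_optimality}; using the smoothed gap rather than a fixed-norm distance is precisely what lets the argument go through without committing to one metric. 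Finally, the case where only finitely many updates occur is easy — after the last update the algorithm is plain Tri-PD with constant steps — so the real content is the infinitely-many-updates case handled by the summable-perturbation argument above.
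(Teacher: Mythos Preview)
Your proposal has a genuine gap. The quasi-Fej\'er / summable-perturbation route you sketch would require controlling $\sum_j (r-1)\|z_{k_j}-z_*\|^2_{V^{(j)}}$, but the $\delta$-trigger controls only the \emph{residual} $\|z_{k+1}-z_k\|^2_V$, not the distance to the saddle point. Nothing in the hypotheses prevents $\|z_{k_j}-z_*\|^2_{V^{(j)}}$ from growing like $r^{j}$: within each block firm nonexpansiveness gives only $\|z_{k_{j+1}}-z_*\|^2_{V^{(j)}}\le\|z_{k_j}-z_*\|^2_{V^{(j)}}$, and the metric change at the block boundary costs a factor $r$. So your perturbations $\varepsilon_k$ are not summable, boundedness of $(z_k)$ does not follow, and the argument stalls precisely at the step you flag as ``the main obstacle''. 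The slogan ``$\delta r<1$ guarantees net progress'' is true for the residual but not for the distance, and your Lyapunov candidate is the distance.

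The paper's proof avoids this entirely by never trying to bound $\|z_k-z_*\|$ uniformly. The key step is a \emph{pointwise} smoothed-gap estimate
\[
G_{\beta/\tau,\beta/\sigma}(\bar z_{k+1},z_*)\;\lesssim\;\|z_{k+1}-z_k\|_{V_k}\,\|z_k-z_*\|_{V_k}\;+\;\tfrac{1}{\beta}\|z_{k+1}-z_k\|_{V_k}^2,
\]
obtained by writing $\|z_{k+1}-z_*\|^2-\|z_k-z_*\|^2=\langle z_{k+1}-z_k,\,z_{k+1}+z_k-2z_*\rangle$ and applying Cauchy--Schwarz. Now the two factors are bounded separately: the residual by $\delta^{L/2}$ (from the trigger, telescoped over $L$ updates) and the distance by $r^{L/2}\|z_0-z_*\|_{V_0}$ (from the metric distortion). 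Their product is $(\delta r)^{L/2}\to 0$, which is exactly where $\delta r<1$ enters. A second bound $\|z_{k+1}-z_k\|^2_{V}\lesssim r^L/k$ (from summing firm nonexpansiveness over all past iterates) handles the regime with few updates; maximizing the minimum of the two bounds over the unknown $L$ yields an explicit sublinear rate in $k$. Lemma~\ref{lem:smoothed_gap_and_optimality} is then applied with $\dot y=y_*$, so the $\|\dot y-y_*\|$ terms vanish and no boundedness of the iterates is needed.
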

\begin{proof}
Let $k \in \mathbb N$ and $s_1, \ldots, s_L$ the iterations where a change in step sizes has occurred, such that $s_L < k \leq s_{L+1}$. We will start by upper bounding $\|z_{k+1} - z_k\|^2_{V_{S_L}}$.

By construction in Algorithm~\ref{alg:residual_norm_cycles}, for all $l$, 
\begin{equation*}
\|z_{s_l+1} - z_{s_{l}}\|^2_{V_{s_l}} \leq \delta 
\|z_{s_{l-1}+1} - z_{s_{l-1}}\|^2_{V_{s_{l-1}}} \;.
\end{equation*}
 Hence,
\begin{equation*}
\|z_{s_L+1} - z_{s_{L}}\|^2_{V_{s_L}} \leq \delta^L 
\|z_{1} - z_{0}\|^2_{V_{0}} \;.
\end{equation*}

Moreover, by firm nonexpansiveness of the PDHG operator~\cite{fercoq2022quadratic}, we know that there exists $\lambda > 0$ such that for all $j \in s_L, \ldots, k$ and $z_* \in \mathcal Z_*$,
\begin{align*}
\lambda \|z_{j+1} - z_j\|_{V_{s_L}}^2 + \|z_{j+1} - z_*\|_{V_{s_L}}^2 \leq \|z_{j} - z_*\|_{V_{s_L}}^2
\end{align*}
and for all $j \in s_L+1, \ldots, k$,
\begin{align*}
\lambda \|z_{j+1} - 2 z_j + z_{j-1}\|^2_{V_{s_L}} + \|z_{j+1} - z_j\|_{V_{s_L}}^2 \leq \|z_{j} - z_{j-1}\|_{V_{s_L}}^2  \;.
\end{align*}
We deduce from this that $\|z_{k} - z_{k-1}\|^2_{V_{s_L}} \leq \|z_{s_L+1} - z_{s_{L}}\|^2_{V_{s_L}}$ and 
\begin{align}
(k - s_L)\|z_{k} - z_{k-1}\|^2_{V_{s_L}} &\leq \sum_{j=s_L}^{k-1} \|z_{j+1} - z_j\|_{V_{s_L}}^2 \leq \frac 1 \lambda \Big( \|z_{s_L} - z_*\|_{V_{s_L}}^2 - \|z_{k} - z_*\|_{V_{s_L}}^2 \Big) \label{eq:bounding_diff_iterates_k}
\end{align}
Similarly, 
\begin{align}
(s_{l+1} - s_l)\|z_{k} - z_{k-1}&\|^2_{V_{s_L}} \leq 
(s_{l+1} - s_l)\|z_{s_L+1} - z_{s_L}\|^2_{V_{s_L}}
\leq (s_{l+1} - s_l) \delta \|z_{s_{L-1}+1} - z_{s_{L-1}}\|^2_{V_{s_{L-1}}} \notag \\
&\leq (s_{l+1} - s_l) \delta^{L-l-1} \|z_{s_{l+1}+1} - z_{s_{l+1}}\|^2_{V_{s_{l+1}}} \notag\\
& \leq (s_{l+1} - s_l) r\delta^{L-l-1} \|z_{s_{l+1}+1} - z_{s_{l+1}}\|^2_{V_{s_{l}}} \leq (s_{l+1} - s_l) r^{L-l} \|z_{s_{l+1}+1} - z_{s_{l+1}}\|^2_{V_{s_{l}}}   \notag \\
&\leq 
r^{L-l} \sum_{j=s_l}^{s_{l+1}-1} \|z_{j+1} - z_j\|_{V_{s_l}}^2 \leq \frac {r^{L-l}}{\lambda} \Big( \|z_{s_l} - z_*\|_{V_{s_l}}^2 - \|z_{s_{l+1}} - z_*\|_{V_{s_l}}^2 \Big) \notag \\
& \leq  \frac {r^{L-l}}{\lambda} \|z_{s_l} - z_*\|_{V_{s_l}}^2 - \frac {r^{L-l-1}}{\lambda} \|z_{s_{l+1}} - z_*\|_{V_{s_{l+1}}}^2 \;. \label{eq:bounding_diff_iterates}
\end{align}


Summing \eqref{eq:bounding_diff_iterates} for $0 \leq l \leq L-1$ with \eqref{eq:bounding_diff_iterates_k} and using the fact that updating the step sizes changes the norm $\|\cdot\|_{V_s}$ by a factor at most $\sqrt r$, we get
\begin{align*}
&k \|z_{k} - z_{k-1}\|_{V_{s_L}}^2 \leq \frac{r^{L}}{\lambda}\|z_0 - z_*\|^2_{V_0} - \frac{1}{\lambda} \|z_k - z_*\|^2_{V_{s_L}} \\
&\|z_{k} - z_{k-1}\|_{V_{s_L}}^2 \leq \frac{r^{L}}{\lambda k}\|z_0 - z_*\|^2_{V_0}\;.
\end{align*}
Combining both upper bounds on $\|z_{k} - z_{k-1}\|_{V_{s_L}}^2$, we get
\begin{equation}
\|z_{k} - z_{k-1}\|_{V_{s_L}}^2 \leq \min\Big(\delta^L \|z_1 - z_0\|^2_{V_0},  \frac{r^{L}}{\lambda k}\|z_0 - z_*\|^2_{V_0}\Big) \; .
\label{bound_on_residuals}
\end{equation}

Following \cite[Theorem 1]{fercoq2022quadratic}, for all $\beta >0$, 
\begin{align*}
G_{\beta/\tau, \beta/\sigma}(\bar z_{k+1}, z_*) &\leq \frac{1+a_2^+}{2} \|z_{k+1} - z_*\|^2_{V_k} - \frac{1+a_2^+}{2} \|z_{k} - z_*\|^2_{V_k} + \frac{1}{\beta}\|z_{k+1} - z_k\|^2 \\
& \leq \frac{1+a_2^+}{2} \Big(\langle z_{k+1} - z_{k}, z_{k+1} + z_{k} - 2 z_*\rangle_{V_k}\Big) + \frac{1}{\beta}\|z_{k+1} - z_k\|^2 \\
& \leq (1+a_2^+) \|z_{k+1} - z_k\|_{V_k} \|(z_{k+1} + z_k) / 2 - z_*\|_{V_k} + \frac{1}{\beta}\|z_{k+1} - z_k\|^2 \\
& \leq (1+a_2^+) \|z_{k+1} - z_k\|_{V_k} \|z_k - z_*\|_{V_k} + \frac{\max(\tau_k, \sigma_k)}{\beta}\|z_{k+1} - z_k\|^2_{V_k}
\end{align*}
Note that this proof technique is reminiscent to \cite{davis2016convergence}.
Inserting \eqref{bound_on_residuals}, we obtain
\begin{align*}
G_{\beta/\tau, \beta/\sigma}(\bar z_{k+1}, z_*) &\leq
(1+a_2^+) \|z_0 - z_*\|_{V_0}\min\Big((\delta r)^{L/2} \|z_1 - z_0\|_{V_0},  \frac{ r^{L}}{\sqrt{\lambda k}}\|z_0 - z_*\|_{V_0}\Big) \\
& \qquad\qquad+ \frac{\max(\tau_0, \sigma_0)}{\beta} \min\Big((\delta r)^L  \|z_1 - z_0\|^2_{V_0},  \frac{r^{2L}}{\lambda k}\|z_0 - z_*\|^2_{V_0}\Big)
\end{align*}

In order to get a bound independent of $L$, we consider its maximum with respect to $L$. Let $\varphi$ defined by
\begin{equation*}
\varphi(x) = \min\Big(a b^x , c d^x\Big)
\end{equation*}
where $a = \|z_1 - z_0\|_{V_0}^2$, $b = \delta r$, $c = \frac{\|z_0 - z_*\|^2_{V_0}}{\lambda k}$ and $d=r^2$. We have that $b<1$ and $d>1$ and
\[
G_{\beta/\tau, \beta/\sigma}(\bar z_{k+1}, z_*) \leq (1+a_2^+) \|z_0 - z_*\|_{V_0}
\sqrt{\max_{x\geq 0} \varphi(x)} + \frac{\max(\tau_0, \sigma_0)}{\beta} \max_{x\geq 0} \varphi(x)
\]
Now, $(x \mapsto a b^x)$ is decreasing and $(x \mapsto c d^x)$ is increasing. Hence, if $a \leq c$, then $a b^x \leq c d^x$ for all $x \geq 0$ and the maximum of $\varphi$ is attained for $x = 0$ with value $\max_x \varphi(x) = \varphi(0) = a$. Note that since $\lim_{k \to +\infty} c = 0$, this first case will happen only in the beginning of the run. Otherwise, $a > c$ and the maximum of $\varphi$ is attained when $a b^x = c d^x$, that is for $x = \frac{\log(a / c)}{\log(d/b)}$.
In this second case,
\begin{align*}
\max_{x \geq 0} \varphi(x) &= a \exp\Big(\frac{\log(a / c)}{\log(d/b)} \log b\Big)\\
&= \exp\Big(\frac{\log(a) \log(b) - \log(c) \log(b) + \log(a) \log(d) - \log(a) \log(b)}{\log(d) - \log(b)} \Big) \\
& = \exp\Big(\frac{\log(a) \log(d)- \log(c) \log(b)}{\log(d) - \log(b)} \Big) \\
& = \exp\Big(\frac{\log(\|z_1 - z_0\|_{V_0}^2)\log(r^2)}{\log(r/\delta)}\Big)\exp\Big(-\frac{\log(k \lambda / \|z_0 - z_*\|^2_{V_0}) \log(1/(\delta r))}{\log(r/\delta)} \Big)
\\ 
& = \|z_1 - z_0\|_{V_0}^{\frac{4\log(r)}{\log(r/\delta)}}
\Big(\frac{k \lambda}{\|z_0 - z_*\|^2_{V_0}}\Big)^{- \frac{\log(1/(\delta r))}{\log(r/\delta)}}
\end{align*}
Since $ \frac{\log(1/(\delta r))}{\log(r/\delta)} > 0$, this ensures that $\lim_{k \to +\infty} \max_{x\geq 0} \varphi(x) = 0$ and thus for any $\beta > 0$, 
\[
\lim_{k \to +\infty} G_{\beta/\tau, \beta/\sigma}(\bar z_{k+1}, z_*) = 0 \;.
\]
We now use the fact that $G_{\beta/\tau, \beta/\sigma}(\bar z_{k+1}, z_*) \geq S_{\beta}(\bar x_{k+1}, y_*)$ \cite[Prop. 8]{fercoq2022quadratic} and $\dot y = y_*$ in Lemma~\ref{lem:smoothed_gap_and_optimality} to get
\begin{align*}
\dist(A\bar x_{k+1}, \dom g) \leq \sqrt{2 \beta G_{\beta/\tau, \beta/\sigma}(\bar z_{k+1}, z_*) } \underset{k \to +\infty}{\longrightarrow} 0 \;.
\end{align*}
Similarly,
\[
f(\bar x_{k+1}) + f_2(\bar x_{k+1}) + g(\proj_{\dom_g}(A \bar x_{k+1})) \underset{k \to +\infty}{\longrightarrow} 0 \;. \qedhere
\]
\end{proof}

\begin{remark}
The proof gives a sublinear convergence speed but it is in fact quite pessimistic.
Like the convergence proof of \cite{goldstein2015adaptive}, the theorem only says that even if the algorithm chooses completely crazy step sizes, the safe guard $\delta r^2 < 1$ will ensure that convergence still holds. Yet, the novelty of Theorem~\ref{thm:convergence} is that we allow the step sizes to be updated even in an asymptotic regime whereas Goldstein et al.\ forced an arbitrary slow down that implies that step size adaption only occurs in the first iterations of the algorithm.
\end{remark}

\section{More general convex-concave saddle point problems}

In~\cite{liang2018local}, Liang, Fadili and Peyré showed that PDHG enjoys finite-time activity identification under a natural non-degeneracy condition.
Their result is proved for constant step sizes but I conjecture that it holds for varying step sizes as soon as the condition of Theorem~\ref{thm:convergence} is satisfied. In particular, they showed that there exists a matrix $R$ and a saddle point $z_*$ such that 
\begin{equation*}
z_{k+1} - z_* = R (z_k - z_*) + o(z_k-z_*) \;.
\end{equation*}
Moreover, the matrix $R$ corresponds to an averaged operator.
Hence, the analysis in the quadratic case describes the behavior of PDHG when activity identification has taken place and the iterates are close enough to a saddle point.
Since Algorithm~\ref{alg:residual_norm_cycles} uses only residual norms to estimate the rate, we can run it even if the problem is not quadratic and it will eventually give sensible results.

Finally, it is shown in Section 4.1 of~\cite{liang2018local} that for linear programs, the asymptotic rate depends only on the product $\sigma \tau$. Hence none of the methods presented in this paper will have an influence on the convergence rate when solving linear programs with PDHG.

\section{PURE-CD}

In order to solve problems in large dimensions, a very efficient technique is to use coordinate update versions of PDHG. A generalization of Goldstein et al's update rule for S-PDHG has been proposed in~\cite{chambolle2023stochastic}.
In this section, we propose a residual balance and a convergence monitoring solution for PURE-CD~\cite{alacaoglu2020random}. The advantage of this algorithm when compared to S-PDHG is its ability to leverage sparsity in the matrix $A$ when choosing which primal and dual variables are going to be updated.

The algorithm is based on the dual version of Algorithm \ref{alg:tri-pd}. At each iteration, a primal coordinate $i_{k+1}$ is selected at random together with all the dual coordinates $j$ such that $A_{i_{k+1}, j} \neq 0$. We shall denote $J(i) = \{j \in \{1, \ldots, m\} \;:\; A_{i,j} \neq 0\}$  and $I(j) = \{i \in \{1, \ldots, n\} \;:\; A_{i,j} \neq 0\}$. We obtain Algorithm~\ref{alg:pure-cd} below.

\begin{algorithm}
	\caption{Primal-dual method with random extrapolation and coordinate descent (PURE-CD)}
	\label{alg:pure-cd}
	\begin{algorithmic}
		\State { \textbf{Input:}} Diagonal matrices $\theta, \tau_k, \sigma_k > 0$, chosen according to~\eqref{eq: theta_choice}.
		\For{$k = 0,1\ldots $} 
		\State $\bar{y}_{k+1} = \prox_{\sigma_k, g^\ast}\left(y_k + \sigma_k Ax_k\right)$
		\State $\bar{x}_{k+1} = \prox_{\tau_k, f}\left(x_k - \tau_k \left(\nabla f_2(x_k) + A^\top \bar{y}_{k+1}\right)\right)$
		\State Draw $i_{k+1} \in \{ 1, \dots, n \}$ with $\mathbb{P}(i_{k+1} = i) = p_i$
		\State $x_{k+1}^{i_{k+1}} = \bar{x}_{k+1}^{i_{k+1}}$, \quad $x_{k+1}^{i} = x_k^i, \forall i \neq i_{k+1}$
		\State $y_{k+1}^{j} = \bar{y}_{k+1}^j + \sigma^j_k \theta_j(A (x_{k+1} - x_k))_j, \forall j \in J(i_{k+1})$, \quad $y_{k+1}^j = y_k^j, \forall j\not\in J(i_{k+1})$
		\EndFor
	\end{algorithmic}	
\end{algorithm}
The algorithm is guaranteed to converge if the functions are convex and  algorithmic parameters satisfy 
\begin{equation}
\pi_j = \sum_{i \in I(j)} p_i,\quad \underline p = \min_i p_i,\quad \theta_j = \frac{\pi_j}{\underline p}, \quad \sigma_k \equiv \sigma \text{ and } 
\tau_k^i \equiv \tau^i < \frac{2p_i - \underline{p}}{L_i(\nabla f_2) p_i + \underline p^{-1}p_i\sum_{j=1}^m \pi_j \sigma^j A_{j, i}^2} \label{eq: theta_choice}
\end{equation}
where $L_i(\nabla f_2)$ is the $i$th coordinate-wise Lipschitz constant of the derivative of $f_2$. 

In the case $p_i = \frac 1n$ for all $i$ and $f_2 = 0$, \cite{alacaoglu2020random} suggests setting the free parameters $\sigma^j$ as
$\sigma^j = \frac{1}{\theta_j \max_{i'}\|A_{i'}\|}$ and then set $\tau^i = \frac{\gamma \max_{i'} \|A_{i'} \|}{\| A_i \|^2}$,
where $0 < \gamma < 1$. This is working rather well in practice but we shall try to optimize the ratio between primal and dual step sizes by keeping one free parameter $s > 0$, which leads to the step sizes
\begin{align}
&\sigma^j(s) = \frac{s}{\theta_j \max_{i'}\|A_{i'}\|} \label{eq:tau_from_s}\\
&\tau^i(s) = \frac{\gamma (2 - \underline p/p_i)}{L_i(\nabla f_2) + s \|A_i\|^2 / \max_{i'} \|A_{i'}\|} \label{eq:sigma_from_s}
\end{align}

\subsection{Residual balance}

The following result shows how to construct stochastic estimates of the norm of the residuals.
They can be then used to enforce residual balance without the need to compute expensive quantities.

\begin{proposition}
Let us consider the iterates of Algorithm~\ref{alg:pure-cd} and define
\begin{align*}
&d_{k+1} = \sigma_k^{-1} \pi^{-1/2}(y_{k} - y_{k+1}) + \pi^{1/2}(\theta-1) A (p^{-1} (x_{k+1} - x_k))  \\
&p_{k+1} =\tau_k^{-1} p^{-1/2} (x_{k} - x_{k+1}) + p_{i_{k+1}}^{-1/2}(\nabla_{i_{k+1}} f_2(\bar x_{k+1}) - \nabla_{i_{k+1}} f_2( x_{k}))e_{i_{k+1}}
\end{align*}
Then $\mathbb E[\|d_{k+1} \|_2^2 | z_k] = \|\bar d\|^2_2$ for some 
$\bar d \in \partial g^*(\bar y_{k+1}) - A \bar x_{k+1}$ and 
$\mathbb E[\|p_{k+1}\|_2^2 | z_k] = \|\bar p\|^2_2$ 
for some $\bar p \in \partial f(\bar x_{k+1}) + \nabla f_2(\bar x_{k+1}) + A^\top \bar y_{k+1}$.
\label{prop:estimate_subgradient_purecd}
\end{proposition}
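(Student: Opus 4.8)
The plan is to verify the two identities separately by unwinding the definition of Algorithm~\ref{alg:pure-cd} and taking conditional expectations over the random draw of $i_{k+1}$. First I would recall the update rules: $\bar y_{k+1} = \prox_{\sigma_k g^*}(y_k + \sigma_k A x_k)$ gives $\sigma_k^{-1}(y_k - \bar y_{k+1}) - A x_k \in \partial g^*(\bar y_{k+1})$, hence $\bar d := \sigma_k^{-1}(y_k - \bar y_{k+1}) + A(\bar x_{k+1} - x_k) \in \partial g^*(\bar y_{k+1}) - A \bar x_{k+1}$. Likewise $\bar x_{k+1} = \prox_{\tau_k f}(x_k - \tau_k(\nabla f_2(x_k) + A^\top \bar y_{k+1}))$ gives $\bar p := \tau_k^{-1}(x_k - \bar x_{k+1}) + \nabla f_2(\bar x_{k+1}) - \nabla f_2(x_k) \in \partial f(\bar x_{k+1}) + \nabla f_2(\bar x_{k+1}) + A^\top \bar y_{k+1}$. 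So the targets $\bar d,\bar p$ are the natural deterministic residuals at $\bar z_{k+1}$, and the work is to show that the stochastic quantities $d_{k+1}, p_{k+1}$ — which are expressed via the \emph{actual} coordinate-sampled iterates $x_{k+1}, y_{k+1}$ rather than the full $\bar x_{k+1}, \bar y_{k+1}$ — have the right second moments coordinatewise.

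For $p_{k+1}$: since only coordinate $i_{k+1}$ of $x$ changes, $x_{k+1} - x_k = (\bar x_{k+1}^{i_{k+1}} - x_k^{i_{k+1}}) e_{i_{k+1}}$, and similarly $\nabla_{i_{k+1}} f_2(\bar x_{k+1}) - \nabla_{i_{k+1}} f_2(x_k)$ appears only on coordinate $i_{k+1}$. Writing $\bar p^i = \tau^{-1,i}(x_k^i - \bar x_{k+1}^i) + \nabla_i f_2(\bar x_{k+1}) - \nabla_i f_2(x_k) + (A^\top(\bar y_{k+1} - \text{shift}))^i$ — actually more carefully, $\bar p^i = \tau^{-1,i}(x_k^i - \bar x_{k+1}^i)$ plus gradient-difference terms, and I must check that on the sampled coordinate the defining relation for $\bar x_{k+1}$ reproduces exactly $\bar p^{i_{k+1}}$. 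Then $p_{k+1} = p_{i_{k+1}}^{-1/2}(\tau^{-1}(x_k - \bar x_{k+1}) + \nabla f_2(\bar x_{k+1}) - \nabla f_2(x_k))\big|_{i_{k+1}} e_{i_{k+1}} = p_{i_{k+1}}^{-1/2}\bar p^{i_{k+1}} e_{i_{k+1}}$, so $\|p_{k+1}\|_2^2 = p_{i_{k+1}}^{-1}(\bar p^{i_{k+1}})^2$ and $\mathbb E[\|p_{k+1}\|_2^2 \mid z_k] = \sum_i p_i \cdot p_i^{-1}(\bar p^i)^2 = \|\bar p\|_2^2$. The $p^{-1/2}$ scaling and $p^{-1}$ diagonal matrix in the definition of $p_{k+1}$ are precisely the importance-sampling correction that makes this work.

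For $d_{k+1}$: here the dual update is $y_{k+1}^j = \bar y_{k+1}^j + \sigma^j \theta_j (A(x_{k+1} - x_k))_j$ for $j \in J(i_{k+1})$ and $y_{k+1}^j = y_k^j$ otherwise. So I would first expand $\sigma_k^{-1}(y_k - y_{k+1})$ componentwise: on $j \in J(i_{k+1})$ it equals $\sigma_k^{-1}(y_k - \bar y_{k+1})_j - \theta_j (A(x_{k+1}-x_k))_j$, and on $j \notin J(i_{k+1})$ it equals $\sigma_k^{-1}(y_k - y_k)_j = 0$, which should be reconciled with the fact that $\bar d^j = \sigma_k^{-1}(y_k - \bar y_{k+1})_j + (A(\bar x_{k+1} - x_k))_j$. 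The second term of $d_{k+1}$, namely $\pi^{1/2}(\theta - 1)A(p^{-1}(x_{k+1}-x_k))$, is designed to cancel and reconstruct things so that, after conditioning, the $j$-th coordinate contributes with the right weight $\pi_j$. The key combinatorial fact is $\mathbb E[\mathbf 1_{j \in J(i_{k+1})} \mid z_k] = \sum_{i \in I(j)} p_i = \pi_j$ and $\mathbb E[(A(x_{k+1}-x_k))_j \mid z_k] = \sum_{i\in I(j)} p_i A_{j,i}(\bar x_{k+1}^i - x_k^i)$; matching these against $(A(\bar x_{k+1}-x_k))_j$ requires the $p^{-1}$ rescaling inside and the $\pi^{1/2}$, $\pi^{-1/2}$ outside, plus the $\theta_j = \pi_j/\underline p$ relation. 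I would compute $\mathbb E[(d_{k+1}^j)^2 \mid z_k]$ for each $j$, cross terms included, and check it equals $(\bar d^j)^2$; summing over $j$ gives the claim.

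The main obstacle I expect is the $d_{k+1}$ bookkeeping: unlike the primal side where exactly one coordinate is touched, the dual update touches a whole set $J(i_{k+1})$ and is itself \emph{not} simply a subsampled version of $\bar y_{k+1} - y_k$ but includes the extrapolation correction $\sigma^j\theta_j(A(x_{k+1}-x_k))_j$. Getting the conditional expectation of the square to collapse to $(\bar d^j)^2$ exactly — rather than with extra variance terms — is a small miracle of the parameter choices in~\eqref{eq: theta_choice}, and I would need to be careful that cross-terms between the two summands of $d_{k+1}$ and the indicator randomness in $\mathbf 1_{j \in J(i_{k+1})}$ combine correctly; this is where a sign error or a missing $\pi_j$ factor would most easily creep in. A sanity check along the way: setting $\theta_j = 1$ (i.e. $p_i$ uniform and no overlap) should degenerate the second term of $d_{k+1}$ away and reduce everything to the deterministic identity, and the $f_2 = 0$ case should similarly simplify $p_{k+1}$.
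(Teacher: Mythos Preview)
Your plan is correct and essentially the same as the paper's: identify $\bar d,\bar p$ from the prox optimality conditions, observe that $d_{k+1}$ and $p_{k+1}$ are supported on the randomly selected coordinates $J(i_{k+1})$ and $\{i_{k+1}\}$, and use importance sampling to recover the second moments. The paper streamlines the dual-side bookkeeping you anticipate by isolating the one-line lemma that if $D_j=\bar d_j/\sqrt{\pi_j}$ for $j\in J(i_{k+1})$ and $D_j=0$ otherwise then $\mathbb E[\|D\|_2^2]=\sum_j (\bar d_j)^2\pi_j^{-1}\cdot\mathbb P(j\in J(i_{k+1}))=\sum_j(\bar d_j)^2$, so the cross-term ``small miracle'' you worry about never actually needs to be invoked.
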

\begin{proof}
Just like for Algorithm~\ref{alg:tri-pd}, but using the notation $ab$ for the element-wise product of two vectors $a$ and $b$, we have
\begin{align*}
& 0 \in \sigma_k \partial g^*(\bar y_{k+1}) + \bar y_{k+1} - y_k - \sigma_k A  x_{k} \\
& 0 \in \tau_k \partial f(\bar x_{k+1}) + \tau_k \nabla f_2(x_k) + \bar x_{k+1} - x_k + \tau_k A^\top \bar y_{k+1}
\end{align*}
Now, by the coordinate selection rule,
\begin{align*}
&\mathbb E[x_{k+1}^i | z_k] = (1-p_i) x_k^i + p_i \bar x_{k+1}^i  \\
& \mathbb E[y_{k+1}^j | z_k ] = (1-\pi_j) y_k^j + \pi_j (\bar y_{k+1}^j + \sigma_k^j \theta_j A(\bar x_{k+1} - x_k)_j)
\end{align*}
Hence,
\begin{align*}
& \bar x_{k+1} - x_k = \mathbb E[p^{-1} (x_{k+1} - x_k) | z_k]\\
& \bar y_{k+1} - y_k + \sigma_k \theta A(\bar x_{k+1} - x_k) = \mathbb E[\pi^{-1}(y_{k+1} - y_k) | z_k] \\
& \bar y_{k+1} - y_k  = \mathbb E[\pi^{-1}(y_{k+1} - y_k) | z_k] - \sigma_k \theta \mathbb E[ A (p^{-1} (x_{k+1} - x_k)) | z_k]
\end{align*}
and we get
\begin{align*}
&\mathbb E[\sigma_k^{-1} \pi^{-1}(y_{k} - y_{k+1}) + (\theta-1) A (p^{-1} (x_{k+1} - x_k)) | z_k] \in \partial g^*(\bar y_{k+1}) - A \bar x_{k+1} \\
&\mathbb E[\tau_k^{-1} p^{-1} (x_{k} - x_{k+1}) + p_{i_{k+1}}^{-1}(\nabla_{i_{k+1}} f_2(\bar x_{k+1}) - \nabla_{i_{k+1}} f_2( x_{k}))e_{i_{k+1}} | z_k] \in \partial f(\bar x_{k+1}) + \nabla f_2(\bar x_{k+1}) + A^\top \bar y_{k+1}
\end{align*}

We then need to compare the norms of the residuals. We shall use the following result:

	Let $\bar d$ be some fixed vector and let $D$ be a random variable such that $D_j = \frac{\bar d_j}{\sqrt{\pi_j}}$ if $j \in J(i_{k+1})$ and $D_j = 0$ if $j \not \in J(i_{k+1})$.
	We have $\mathbb E[\|D\|_2^2] = \|\bar d\|_2^2$.
Indeed, since $\pi_j = \sum_{i \in I(j)} p_i$, we have
	\begin{align*}
	\mathbb E[\|D\|_2^2] = \sum_{i=1}^n p_i \sum_{j \in J(i)}\frac{(\bar d_j)^2}{\pi_j} = \sum_{j=1}^m \sum_{i \in I(j)} \frac{p_i}{\pi_j} (\bar d_j)_2^2=
	\sum_{j=1}^m (\bar d_j)_2^2 = \|\bar d\|^2_2  \;.
	\end{align*}

We then define $\bar d = \mathbb E[\sigma_k^{-1} \pi^{-1}(y_{k} - y_{k+1}) + (\theta-1) A (p^{-1} (x_{k+1} - x_k)) | z_k]$ and use the previous result for $D = d_{k+1}$. Similar arguments can be done for the primal residual.
\end{proof}

In this proposition, we need $\nabla_{i_{k+1}} f_2(\bar x_{k+1})$. If $f_2$ is separable, then this requires only $\bar x_{k+1}^{i_{k+1}}$ which is given by the algorithm. If $f_2$ is quadratic, $\nabla f_2$ is affine and we can safely replace $\bar x_{k+1}$ by $x_k + \mathbb E[p^{-1} (x_{k+1} - x_k) | z_k]$.
In those two important cases, we obtain an unbiased estimator of an element in the sub/super-gradient of the Lagrangian. In the general case, we may need to use approximations of $\nabla_{i_{k+1}} f_2(\bar x_{k+1})$ with some bias.

Using Proposition~\ref{prop:estimate_subgradient_purecd}, we can replace the primal and dual residuals in Algorithm~\ref{alg:goldstein} by their stochastic counterparts and obtain a residual balance step size adaptation method.
Then, the analysis of stochastic adaptive step sizes in~\cite{chambolle2023stochastic}, which relies mainly on the almost sure slow down of the updates, can be adapted to ensure that the algorithm retains convergence even with the residual balance based adaptive step sizes.

\subsection{Convergence monitoring}

Monitoring convergence of a stochastic algorithm, is more tricky than for a deterministic one. Even if the Lagrangian is a quadratic function, the update matrix changes at each iteration. Each of these matrix has no reason to induce a contraction: only their product will convey information on the convergence rate. Moreover, since $z_{k+1} = R_{k+1}z_k + b_{k+1}$, the basic idea of using power iterations on $z_{k+1} - z_k$ breaks down: $z_{k+1} - z_k = R_{k+1}z_k - R_k z_{k-1} + b_{k+1} - b_k$ and this does not amount to observing a matrix product any more.

We thus propose here a less precise technique, but which is compatible with a random algorithm. Let $M : \mathbb R^n \times \mathbb R^m \to \mathbb R_+$ be some optimality measure, that is a computable function such that $M(z) = 0$ if and only if $z$ is a saddle point of the Lagrangian. One may for instance use the self-centered smoothed duality gap as an optimality measure~\cite{fercoq2022quadratic,walwil:hal-04501394}.

Our estimate of the rate will then be constructed from
\[
\hat \rho_{i:j} = \Big(\frac{M(z_{j})}{M(z_i)}\Big)^{1/(j-i)} = \Big(\prod_{k=i}^{j-1} \frac{M(z_{k+1})}{M(z_k)}\Big)^{1/(j-i)} \;.
\]
Our goal is to compare the rates for two different values of the step sizes given by the parameters $s^1$ and $s^2$, where the dependence of the step size on this scalar parameter is given in \eqref{eq:tau_from_s}, \eqref{eq:sigma_from_s}. We shall denote the set of iterations where the step size has been $s^1$ by $K_1 = \{k' \;:\; s_{k'} = s^1\}$ and similarly $K_2$.
Even if it goes in contradiction with what we learnt in Section~\ref{sec:quadratic}, we will assume that for all $k$, $\hat \rho_{k:k+1}$ is an independent identically distributed log-normal random variable with parameters $\log(\rho)$ and $\Sigma^2$.

We thus have $\log(\hat \rho_{K_1}) = \frac{1}{|K_1|} \sum_{l \in K_1} \log(\hat \rho_{l:l+1})$ so that $\log(\hat \rho_{K_1})$ is a normal random variable with mean $\log(\rho_1)$ and variance $\frac{\Sigma_1^2}{|K_1|}$, where $\Sigma_1$ can be estimated as the standard deviation of $\log(\hat \rho_{l:l+1})$, $l \in K_1$. Similarly, we can estimate $\log(\rho_2)$ and $\frac{\Sigma_2^2}{|K_2|}$.
By analogy to \eqref{eq:rate_strong_convex}, we can hope that in favorable cases, 
multiplying $s$ by a factor $\alpha$ or $1/\alpha$ will have an influence on the rate given by 
\begin{align*}
&1 - \rho(\alpha^{-1} s) \!=\! c \min\big(\mu_f \alpha \tau(s), \mu_{g^*}\alpha^{-1} \sigma(s)\big) \in c \min\big(\mu_f \tau(s), \mu_{g^*} \sigma(s)\big) [\alpha^{-1}, \alpha] \!=\! (1 - \rho(s))[\alpha^{-1}, \alpha]  \\
&1 - \rho(\alpha s) = c \min\big(\mu_f \alpha^{-1} \tau(s), \mu_{g^*}\alpha \sigma(s)\big) \in (1 - \rho(s))[\alpha^{-1}, \alpha] \;.
\end{align*}
We would like to be able to discriminate between $\rho(s)$ and $\rho(\alpha s)$ with a sufficiently high probability. 
We have two Gaussian models $R_1$ for the rate at $s=s^1$ and $R_2$ for the rate at $s = s^2$. We thus design a statistical test and choose $s = s^2$ only if we accept the hypothesis that $R_1 > R_2$. This test will be designed using the probability $p = \mathbb P(R_1 > R_2) = 0.5 (1 - \Phi\Big(\frac{\log(\hat \rho_{K_1}) - \log(\hat\rho_{K_2})}{\Sigma_1^2/|K_1| + \Sigma_2^2/|K_2|}\Big))$ where $\Phi$ is Gauss's error function. 
Note that if $\rho_1 \neq \rho_2$, this probability will significantly differ from 0.5 as soon as $K_1$ and $K_2$ are large enough. We present the full procedure in Algorithm~\ref{alg:cm_pure_cd}. $\bar s_k$ is our currently trusted step size, $\underline s_k$ is our current tentative step size and $s_k$ is the currently used step size. If $p < 0.45$, we reject the proposal and try another one. If $p > 0.55$, we accept the proposal and make it our current trusted step size. The value for rejection or acceptation may look quite close to 0.5 but we experienced that this value balances well the trade-off between making no mistake and updating faster the step sizes.


\begin{algorithm}
	\begin{algorithmic}
	\State $\bar s_{0} > 0$, $r = 2$, $u_0 = 1$, $\underline s_{0} = \bar s_{0} r^{u_0}$ or $\underline s_0$ given by a few iterations of residual balance, $s_0 = \bar s_0$, $0 < \delta < r^{-1}$
	\For{$k \in \mathbb N$}
	\State Run PURE-CD (Algorithm~\ref{alg:pure-cd}) with step sizes set using $s_k$ until $M(z_{k+1}) \leq \delta M(z_{k})$
	\State Set $\bar \rho_k, \bar \Sigma_k$ as the mean and standard deviation of $\bar K = \{ \hat \rho_{l:l+1} \;:\; \sigma_l = \bar \sigma_k\}$
	\State Set $\underline \rho_k, \underline \Sigma_k$ as the mean and standard deviation of $\underline K = \{ \hat \rho_{l:l+1} \;:\; \sigma_l = \underline \sigma_k\}$
	\State Set $p = 1 - \Phi\Big(\dfrac{\log(\bar \rho_k) - \log(\underline \rho_k)}{\bar \Sigma_k^2/|\bar K| + \underline \Sigma_k^2/ |\underline K|} \Big)$
	\If{$p < 0.45$}
	\State $\bar s_{k+1} = \bar s_{k} $
	\State $u_{k+1} = - u_k$  \hfill {\em we revert gear}
	\State $\underline s_{k+1} = \bar s_{k} r^{u_{k+1}}$
	\State $s_{k+1} \in \{\bar s_{k+1}, \underline s_{k+1}\} \setminus \{s_k\}$
	\ElsIf{$0.45 \leq p \leq 0.55$}
	\State $\bar s_{k+1} = \bar s_{k} $, $\underline s_{k+1} = \underline s_{k}$, $u_{k+1} = u_k$
	\State $s_{k+1} \in \{\bar s_{k+1}, \underline s_{k+1}\} \setminus \{s_k\}$ \hfill {\em we try to reduce standard deviation}
	\ElsIf{$p > 0.55$}
	\State $\bar s_{k+1} = \underline s_k$, $u_{k+1} = u_k$
	\State $\underline s_{k+1} = \bar s_{k} r^{u_{k+1}}$ \hfill {\em we proceed further}
	\State $s_{k+1} = \underline s_{k+1}$
	\EndIf
	\EndFor
	\end{algorithmic}
\caption{Convergence monitoring for PURE-CD} \label{alg:cm_pure_cd}
\end{algorithm}

\subsection{AR model for convergence monitoring}
\label{sec:ar}

The model presented in the previous section takes into account the stochastic feature of the algorithm but does not leverage the fact that the iterate sequence may be spiraling to the saddle point, which results in the periodic behavior of naive rate estimates. 

Instead of assuming that $\log(\hat \rho_{l:l+1})$ is an i.i.d. Gaussian process, we may rather assume that $\log(\hat \rho_{l:l+1})$ is an autoregressive process of order 1. This dependence on the past values is motivated by the fact that we may have a pair of conjugate eigenvalues. Indeed, if $u(k) = \sin(\theta k)$, we can write $u(k+1) = \sin(\theta(k+1)) = \cos(\theta) \sin(\theta k) + \sin(\theta) \cos(\theta k) = a_1 u(k) + \epsilon(k+1)$ where $|a_1| \leq 1$ and $|\epsilon(k+1)| \leq \sin(\theta)$.

The stochastic model for the instant rate is then
\[
\log(\hat \rho_{l:l+1}) = a_0(s) + a_1(s) \log(\hat \rho_{l-1:l}) + \Sigma(s) \epsilon_{l+1}
\]
where $a_0(s), a_1(s), \sigma(s)$ are the parameters of the model and $\epsilon_{l+1}$ is an i.i.d. centered standard Gaussian noise. All the parameters will depend on the step size parameter $s$ and the estimate of the rate will be an estimate of $\mathbb E[\log(\hat \rho_{l:l+1})]$, which assuming stationarity of the stochastic process, is given by
\[
\mathbb E[\log(\hat \rho_{l:l+1})] = \frac{a_0(s)}{1 - a_1(s)} \;.
\]
The cross-covariance is given by
\[
r_k(s) = a_1(s) r_{k-1}(s)+ \Sigma(s)^2 \delta_k
\]
so that the variance satisfies 
$r_0(s) = \frac{\Sigma(s)^2}{1-a_1(s)^2}$. We shall estimate the parameters using the least squares problem
\[
(\hat a_0(s), \hat a_1(s)) = \arg\min_{a_0, a_1} \frac12 \sum_{l \in |K_1|} \big(\log(\hat \rho_{l:l+1}) - a_0 - a_1 \log(\hat \rho_{l-1:l}) \big)^2
\]
and define 
\[
\log(\hat \rho(s)) =  \frac{\hat a_0(s)}{1 - \hat a_1(s)} \;.
\]

By \cite{tiao1983consistency}, we know that this estimate is consistent. Its variance will be estimated by
\[
\hat V(s) = \frac{\hat r_0(s)}{|K_1|-1}\frac{1+\hat a_1}{1-\hat a_1}=  \frac{\hat \Sigma(s)^2}{(|K_1|-1)(1-\hat a_1(s))^2} \;.
\]

In Table~\ref{tab:ar_on_toy}, we compare the i.i.d. model with the autoregressive model and the residual balance strategy. We can see that residual balance gives an algorithm which is quite insensitive to the initial step sizes. However, the speed of convergence that we obtain is not optimal. On the other hand, convergence monitoring takes time before finding what good step sizes are, and even more time when the initial guess is far from optimum. We can also see that the autoregressive model looks faster to discriminate rates than the i.i.d. model.
Finally, our combination of residual balance and convergence monitoring is able to check whether step sizes dictated by residual balance are better than the current one, so that the final behavior on this problem is quite promising.

\begin{table}
\begin{tabular}{l|r|r|r|r}
Initial $s$ & 0.001 & 0.1 & 1 & 10 \\
\hline
Constant step size & 114,784 & 7,057 &43,146 & 419,381\\
Residual balance & 37,144 & 37,620 &35,621 & 37,935 \\
Convergence monitoring (i.i.d model) &17,635 &23,958 & 34,287 & 197,316 \\
Convergence monitoring (AR1 model) & 20,562, & 8,233 & 17,119 & 140,005\\
residual balance + i.i.d model &24,371 & 9,482 &15,548 & 14,021 \\	
\end{tabular}
\caption{Number of iterations to reach a duality gap equal to $10^{-10}$ on the toy problem of Figure~\ref{fig:compareVand2norms}. We~performed one single run, so that the figures do not illustrate the stochastic nature of the algorithm. We~can see that combining residual balance and convergence monitoring consistently gives a quick algorithm.}
\label{tab:ar_on_toy}
\end{table}

\subsection{Convergence with varying step sizes}

Let us now show that our adaptive step sizes do not prevent convergence. Like for deterministic PDHG, we have no slow down in the updates, so that the proof of \cite{chambolle2023stochastic} does not apply.
Yet, thanks to Lemma~\ref{lem:smoothed_gap_and_optimality}, we know that we just need to control the smoothed gap defined in \eqref{eq:smoothed_gap} in order to prove convergence of the algorithm.

The convergence proof of PURE-CD \cite{alacaoglu2020random} uses two primal-dual sequences.
The plain sequence that we will denote $(z_{k})$ and an averaged sequence that we will denote $(z_k^{\mathrm{av}})$. 
However, for a technical reason, the averaged sequence is averaging a modification of the plain sequence: $x_k^{\mathrm{av}} = \frac{1}{k}\sum_{k'=1}^k x_{k'}$ but
$y_k^{\mathrm{av}} = \frac{1}{k}\sum_{k'=1}^k \check y_{k'}$ for some vector $\check y_k$.
We now prove a technical lemma that shows that $z_k^{\mathrm{av}}$ does not go too far away from the initial point of the $l$th step, namely $\tilde z_l$.
We will need to use the weighted distance to saddle point introduced in~\cite{alacaoglu2020random} and defined as $\Delta_l(z) = D_p(x_k, z^*_k) + \frac{\underline p}{2}\|x_k - x_k^*\|^2_{\tau_l^{-1}p^{-1}} + \frac{\underline p}{2}\|y_k - y_k^*\|^2_{\sigma_l^{-1}\pi^{-1}}$. For the precise meaning of each term, we refer the reader to \cite{alacaoglu2020random}.
\begin{lemma}
$\exists C>0$ independent of $s_l$ such that for $\gamma = (p^{-1}\tau^{-1}, \pi^{-1}\sigma^{-1}) > 0$,

\noindent $\mathbb E[\|z_k^{\mathrm{av}} - \tilde z_l\|^2_\gamma | \tilde z_l]\leq C \Delta_l(\tilde z_l)$.
\label{lem:pure-cd-averaged-sequence}
\end{lemma}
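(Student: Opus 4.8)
The plan is to bound $\|z_k^{\mathrm{av}} - \tilde z_l\|_\gamma^2$ by comparison with a fixed saddle point $z_* = (x_*,y_*) \in \mathcal Z_*$, exploiting that within the $l$-th step every PURE-CD iterate is controlled by the weighted energy $\Delta_l$. Write $N$ for the number of iterations performed with step size $s_l$, and take the averages $x_k^{\mathrm{av}}, y_k^{\mathrm{av}}$ over those $N$ iterations (restarting the average at the beginning of each step), so that $x_k^{\mathrm{av}}$ is the mean of the $x_{k'}$ and $y_k^{\mathrm{av}}$ the mean of the $\check y_{k'}$. During the $l$-th step the step sizes $\tau_l,\sigma_l$ are constant, hence the weights in $\|\cdot\|_\gamma$ are constant, and since $\|z\|_\gamma^2 = \|x\|^2_{\tau_l^{-1}p^{-1}} + \|y\|^2_{\sigma_l^{-1}\pi^{-1}}$, Jensen's inequality gives
\begin{equation*}
\|z_k^{\mathrm{av}} - \tilde z_l\|_\gamma^2 \le \frac1N \sum_{k'} \Big( \|x_{k'} - \tilde x_l\|^2_{\tau_l^{-1}p^{-1}} + \|\check y_{k'} - \tilde y_l\|^2_{\sigma_l^{-1}\pi^{-1}} \Big) .
\end{equation*}
On each summand I would use $\|a - \tilde a_l\|^2 \le 2\|a - a_*\|^2 + 2\|a_* - \tilde a_l\|^2$; the contribution of the second terms is at most $\frac{4}{\underline p}\Delta_l(\tilde z_l)$ because $\Delta_l(z) \ge \frac{\underline p}{2}\big(\|x - x_*\|^2_{\tau_l^{-1}p^{-1}} + \|y - y_*\|^2_{\sigma_l^{-1}\pi^{-1}}\big)$ after dropping the nonnegative Bregman term $D_p$. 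It then remains to bound, uniformly in $k'$, the conditional expectations of $\|x_{k'} - x_*\|^2_{\tau_l^{-1}p^{-1}}$ and $\|\check y_{k'} - y_*\|^2_{\sigma_l^{-1}\pi^{-1}}$ by a multiple of $\Delta_l(\tilde z_l)$.

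For the primal iterates this is immediate from the convergence analysis of PURE-CD in \cite{alacaoglu2020random}: under the step-size condition \eqref{eq: theta_choice}, which holds throughout the $l$-th step, $\big(\Delta_l(z_{k'})\big)_{k'}$ is a nonnegative supermartingale for the natural filtration, since the primal-dual gap at $\bar z_{k'+1}$ relative to any saddle point is nonnegative. Hence $\mathbb E[\Delta_l(z_{k'}) \mid \tilde z_l] \le \Delta_l(\tilde z_l)$, and consequently $\mathbb E[\|x_{k'} - x_*\|^2_{\tau_l^{-1}p^{-1}} \mid \tilde z_l] \le \frac{2}{\underline p}\Delta_l(\tilde z_l)$.

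The dual part, involving the auxiliary sequence $\check y$, is the main obstacle, and I would handle it by reusing the internal estimates of \cite{alacaoglu2020random}. Since $\check y_{k'+1}$ is obtained from $\bar y_{k'+1}$ by a full (non-coordinate) update of the form $\check y_{k'+1} = \bar y_{k'+1} + \sigma_l\theta A(\bar x_{k'+1} - x_{k'})$, I would write $\|\check y_{k'+1} - y_*\|^2 \le 2\|\bar y_{k'+1} - y_*\|^2 + 2\|\sigma_l\theta A(\bar x_{k'+1} - x_{k'})\|^2$: the first term is controlled through nonexpansiveness of $\prox_{\sigma_l, g^*}$ about $y_*$ together with the distance terms in $\Delta_l$, and the second through the quantity $\|\bar x_{k'+1} - x_{k'}\|^2$ that telescopes in the descent inequality of \cite{alacaoglu2020random}; each is bounded in conditional expectation by a constant times $\Delta_l(\tilde z_l)$. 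Averaging all these bounds over the $N$ iterations of the step (the factor $N$ cancels) and adding the $\frac{4}{\underline p}\Delta_l(\tilde z_l)$ term from the distance of $\tilde z_l$ to $z_*$ yields the stated inequality.

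The delicate point to verify carefully is that the constants extracted from \cite{alacaoglu2020random} — the supermartingale constant, the telescoping constant for $\sum_{k'}\|\bar x_{k'+1}-x_{k'}\|^2$, and the $\prox$ nonexpansiveness constant — are invariant under the rescaling $s_l \mapsto (\tau_l(s_l),\sigma_l(s_l))$ of \eqref{eq:tau_from_s}--\eqref{eq:sigma_from_s}. This is where the "independent of $s_l$" claim is really earned: every norm entering the argument, namely $\|\cdot\|_\gamma$, the weighted distance inside $\Delta_l$, and the step-size-dependent factors in \eqref{eq: theta_choice}, scales consistently with $(\tau_l,\sigma_l)$, so the ratio $\mathbb E[\|z_k^{\mathrm{av}} - \tilde z_l\|_\gamma^2 \mid \tilde z_l]\,/\,\Delta_l(\tilde z_l)$ is scale-free, which produces an $s_l$-independent constant $C$.
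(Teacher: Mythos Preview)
Your proposal is correct and follows essentially the same strategy as the paper: Jensen's inequality on the average, a triangle-inequality split through a saddle point, the supermartingale property of $\Delta_l$ for the plain iterates, and the telescoping bound on $\sum_{k'}\|\bar x_{k'+1}-x_{k'}\|^2$ from \cite{alacaoglu2020random} for the extrapolation term.

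The one genuine difference is how the auxiliary dual sequence $\check y_{k'}$ is treated. The paper routes through the \emph{plain} iterate, writing $\|\check y_{k'}-\tilde y_l\|^2 \le 2\|\check y_{k'}-y_{k'}\|^2 + 2\|y_{k'}-\tilde y_l\|^2$ and invoking Lemma~5 of \cite{alacaoglu2020random} as a black box to control the first term; this only needs the abstract recursion $\mathbb E[\|\check y_{k+1}-y_{k+1}\|^2\mid z_k]\le \|\check y_k-y_k\|^2 + \|\bar x_{k+1}-x_k\|^2_B$, not the explicit form of $\check y$. You instead assume the explicit formula $\check y_{k'+1}=\bar y_{k'+1}+\sigma_l\theta A(\bar x_{k'+1}-x_{k'})$ and route through $\bar y_{k'+1}$ via prox nonexpansiveness. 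Your path is slightly more exposed: it depends on the precise definition of $\check y$ in \cite{alacaoglu2020random}, and the nonexpansiveness of $\prox_{\sigma_l,g^*}$ in the $\sigma^{-1}\pi^{-1}$-norm requires separability of $g^*$ (or an extra argument). The paper's route sidesteps both issues. On the other hand, the paper does one thing you leave implicit: it explicitly computes $C_{\tau,\tilde V}=(2\underline p/p_i-1)(1-\gamma)$ under the step-size rule \eqref{eq:tau_from_s}--\eqref{eq:sigma_from_s} to confirm the constant is independent of $s_l$, whereas you only sketch the scale-invariance argument. Both arrive at the same place; the paper's is a bit more self-contained.
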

\begin{proof}
From Lemma 5 in \cite{alacaoglu2020random}, we know that for $\gamma_2 = \pi^{-1}\sigma^{-1} > 0$,
\[
\mathbb E[\|\check y_{k+1} - y_{k+1}\|^2_{\pi^{-1}\sigma^{-1}}| z_k] \leq \|\check y_k - y_k\|^2_{\pi^{-1}\sigma^{-1}} + \|\bar x_{k+1} - x_k\|^2_{B(\pi^{-1}\sigma^{-1})}
\]
where $B(\pi^{-1} \sigma^{-1})_i = p_i \sum_{j=1}^m \theta_j^2 \pi_j^{-1} \sigma_j A_{j,i}^2 \leq c_p \tau^{-1}$ for some $c_p$ that depends only on $p$. Hence, by summing for $k \in \{0, \dots, k\}$ and using the fact that $\check y_0 = y_0$, we get
\[
\mathbb E[\|\check y_{k+1} - y_{k+1}\|^2_{\pi^{-1} \sigma^{-1}} | z_0] \leq c_p \sum_{k'=0}^k \|\bar x_{k'+1} - x_{k'}\|^2_{\tau^{-1}} \leq \frac{2c_p}{C_{\tau, \tilde V}} \Delta_l(\tilde z_l)
\]
where the last inequality follows from (37) in \cite{alacaoglu2020random}. Here 
\begin{equation*}
C_{\tau, \tilde V} = \min_i \tau_i C(\tau)_i = \min_i \frac{2p_i}{\underline p} -1 -p_i \sum_{j=1}^m \pi_j^{-1} \sigma_j \tau_i \theta_j^2 A_{j,i}^2 - \frac{L_i(\nabla f_2) p_i}{\underline p} \tau_i
\end{equation*}
Using $\tau_i = \frac{\gamma (2 - \underline p/p_i)}{L_i(\nabla f_2) + s_l \|A_i\|^2 / \max_{i'} \|A_{i'}\|}$ and $\sigma_j = \frac{s_l}{\theta_j \max_{i'}\|A_{i'}\|}$ we get $C_{\tau, \tilde V}  = (2 \frac{\underline p}{p_i} - 1)(1-\gamma)$,
which is independent of $s$.

Now for the projection of $z_0 = \tilde z_l$ onto the set of saddle points, denoted $z^*_0 = (x^*_0, y^*_0)$,
\begin{align*}
\|z_k^{\mathrm{av}} - \tilde z_l\|^2_\gamma & = \|x_k^{\mathrm{av}} - \tilde x_l\|^2_{p^{-1}\tau^{-1}} + \|y_k^{\mathrm{av}} - \tilde y_l\|^2_{\pi^{-1}\sigma^{-1}} \leq \frac 1k \sum_{k'=1}^k \|x_{k'} - \tilde x_l\|^2_{p^{-1}\tau^{-1}} + \|\check y_{k'} - \tilde y_l\|^2_{\pi^{-1}\sigma^{-1}} \\
& \leq \frac 1k \sum_{k'=1}^k \Big( \|x_{k'} - \tilde x_l\|^2_{p^{-1}\tau^{-1}} + 2 \|\check y_{k'} - y_k\|^2_{\pi^{-1}\sigma^{-1}} + 2\|y_k - \tilde y_l\|^2_{\pi^{-1}\sigma^{-1}}\Big) \\
\mathbb E[\|z_k^{\mathrm{av}} - \tilde z_l\|^2_\gamma |\tilde z_l]& \leq \frac 4k \sum_{k'=1}^k \Big(\|x_{k'} - x_0^*\|^2_{p^{-1}\tau^{-1}} + \|x_0^* - \tilde x_l\|^2_{p^{-1}\tau^{-1}} + \|y_{k'} - y^*_0\|^2_{\pi^{-1}\sigma^{-1}} + \|y_0^* - \tilde y_l\|^2_{\pi^{-1}\sigma^{-1}} \Big) \\
&\qquad \qquad +\frac{4c_p}{C_{\tau, \tilde V}} \Delta_l(\tilde z_l) \\
& \leq \Big(\frac{8}{\underline p} + \frac{4 c_p}{C_{\tau, \tilde V}} \Big)\Delta_l(\tilde z_l)
\end{align*}
where we used the fact that $D_p(x_k, z^*) \geq 0$ and  $D_p(x_k, z^*) + \frac{\underline p}{2}\|x_k - x^*\|^2_{\tau_l^{-1}p^{-1}} + \frac{\underline p}{2}\|y_k - y^*\|^2_{\sigma_l^{-1}\pi^{-1}}$ is decreasing in expectation as a function of $k$ for any saddle point $z^*$ (34) in \cite{alacaoglu2020random}.
\end{proof}

\begin{algorithm}[tbp]
	\caption{PURE-CD with adaptive step sizes}
	\label{alg:pure-cd-adap}
	\begin{algorithmic}
	\State Set $s_0 > 0$, $\epsilon_0 = G_{c_1 s_0, \frac{c_2}{ s_0}}(z_0, z_{0})$
	\For{$l = 0,1\ldots $} 
	\State $k=0$, $z_0 = \tilde z_{l}$
	\While{$G_{\frac{c_1 s_l}{k}, \frac{c_2}{k s_l}}(z^{\mathrm{av}}_{k}, z^{\mathrm{av}}_{k}) > \epsilon_l$}
	\State Generate $z_{k+1}$ and $z^{\mathrm{av}}_{k+1}$ using PURE-CD with step sizes $\tau(s_l)$, $\sigma(s_l)$
	\State $k \leftarrow k+1$
	\EndWhile
	\State Set $k_l=k$ the number of iterations of the inner loop
	\State Set $\tilde z_{l+1} = z_{k_l}$
	\State Set $\epsilon_{l+1} = \delta G_{\frac{c_1}{k_l}, \frac{c_2}{k_l}}(z^{\mathrm{av}}_{k_l}, z^{\mathrm{av}}_{k_l}) $
	\State Choose $s_{l+1}$ such that $r^{-1} \leq \frac{\tau(s_{l+1})}{\tau(s_l)} \leq r$ and $r^{-1} \leq \frac{\sigma(s_{l+1})}{\sigma(s_l)} \leq r$
	\EndFor
	\end{algorithmic}	
\end{algorithm}

\begin{theorem}
Suppose that in Algorithm~\ref{alg:pure-cd-adap}, we choose $\delta < 1$ and multiply or divide by at most $r$ the step sizes. Assume also that $f(x_l)+f_2(x_l)$ and $\sum_{j=1}^m (\pi_j^{-1} - 1)g_j^*(y_l^{(j)})$ are almost surely uniformly bounded as functions of $l$.
Then for any probability level $1-\rho$, we get $G_{\frac{2c_1}{k_l}, \frac{2c_2}{k_l}}(z_{l+1}, z_{l+1}) \leq \delta^L \epsilon_0$ after a number of iterations bounded by
\begin{align*}
\sum_{l=0}^{L-1} k_l \leq \frac{L C_3}{\rho \epsilon_0} \frac{\delta^{-L}-1}{\delta^{-1} - 1}
+ \frac{L C_5 \Delta_0(z_0)}{\rho \epsilon_0} \frac{(r^2/\delta)^L - 1}{r^2/\delta - 1} + L 
\end{align*}
were $c_1$, $c_2$, $C_3$ and $C_5$ are problem-dependent constants.

Moreover, for any $\alpha > 0$, choosing $r = \delta^{-\alpha/2}$ and neglecting probabilistic arguments, we get a precision $\epsilon$ in smoothed gap after
$O(\epsilon^{-\alpha-1})$ iterations.

\end{theorem}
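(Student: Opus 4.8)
The plan is to telescope the analysis over the outer stages $l = 0, \ldots, L-1$ of Algorithm~\ref{alg:pure-cd-adap}, treating each stage as a fixed-step-size run of PURE-CD that is restarted with slightly modified step sizes. At a given stage $l$, PURE-CD is run with constant step sizes $\tau(s_l), \sigma(s_l)$ from $\tilde z_l$, so the ergodic convergence analysis of PURE-CD~\cite{alacaoglu2020random} provides a bound of the form $\mathbb E[G_{c_1 s_l / k,\, c_2/(k s_l)}(z_k^{\mathrm{av}}, z_*) \mid \tilde z_l] \le C \Delta_l(\tilde z_l)/k$ for the averaged iterate. To obtain the \emph{self-centered} gap $G(z_k^{\mathrm{av}}, z_k^{\mathrm{av}})$ appearing in the while-loop condition I would replace the center $z_*$ by $z_k^{\mathrm{av}}$, at the cost of a term $O(\|z_k^{\mathrm{av}} - z_*\|^2_\gamma)$, which is itself $O(\Delta_l(\tilde z_l))$: partly by monotonicity of $\Delta_l$ along the stage ((34) in~\cite{alacaoglu2020random}), and partly by Lemma~\ref{lem:pure-cd-averaged-sequence}, whose point is precisely that the averaged iterate stays within $O(\Delta_l(\tilde z_l))$ of $\tilde z_l$. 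The crucial bookkeeping is that, thanks to the explicit form \eqref{eq:tau_from_s}--\eqref{eq:sigma_from_s} and the computation in Lemma~\ref{lem:pure-cd-averaged-sequence} showing $C_{\tau, \tilde V}$ does not depend on $s$, the constant can be taken \emph{independent of} $s_l$: $\mathbb E[G_{c_1 s_l/k,\, c_2/(k s_l)}(z_k^{\mathrm{av}}, z_k^{\mathrm{av}}) \mid \tilde z_l] \le C_4 \Delta_l(\tilde z_l)/k$. In particular the inner while-loop terminates almost surely.

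Next I would convert this expectation bound into a high-probability control of the stopping time $k_l$. Since $\{k_l > k\} \subseteq \{G_{c_1 s_l/k,\, c_2/(k s_l)}(z_k^{\mathrm{av}}, z_k^{\mathrm{av}}) > \epsilon_l\}$, Markov's inequality gives $\mathbb P(k_l > k \mid \tilde z_l) \le C_4 \Delta_l(\tilde z_l)/(k \epsilon_l)$, so choosing $k = \lceil C_4 \Delta_l(\tilde z_l) L/(\rho \epsilon_l) \rceil$ makes each stage fail with probability at most $\rho/L$. A union bound over $l = 0, \ldots, L-1$ then yields, on an event of probability at least $1-\rho$, that $k_l \le C_4 \Delta_l(\tilde z_l) L/(\rho \epsilon_l) + 1$ for every stage simultaneously, the $+1$'s summing to the additive $L$ in the statement. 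For the target accuracy, the exit condition together with the update $\epsilon_{l+1} = \delta\, G_{c_1/k_l,\, c_2/k_l}(z_{k_l}^{\mathrm{av}}, z_{k_l}^{\mathrm{av}})$, and the passage from the averaged iterate and from parameters $(c_1 s_l/k_l, c_2/(k_l s_l))$ to $(2c_1/k_l, 2c_2/k_l)$ evaluated at $\tilde z_{l+1} = z_{k_l}$ (this is exactly where the factor $2$ and the disappearance of $s_l$ in the theorem come from, using Lemma~\ref{lem:pure-cd-averaged-sequence} once more to absorb $\|z_{k_l}^{\mathrm{av}} - \tilde z_{l+1}\|^2$), gives $\epsilon_l \le \delta^l \epsilon_0$ and the claimed $G_{2c_1/k_l,\, 2c_2/k_l}(z_{l+1}, z_{l+1}) \le \delta^L \epsilon_0$.

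It then remains to bound $\Delta_l(\tilde z_l)$ in two complementary ways. First, since a step-size change rescales the weighted distance $\Delta$ and the smoothed-gap parameters each by at most a bounded factor (jointly at most $r^2$), while $\Delta_{l'}(\cdot)$ is a supermartingale along stage $l'$, optional stopping propagates $\mathbb E[\Delta_l(\tilde z_l)] \le (r^2)^l \Delta_0(z_0)$ through the restarts. Second, the assumed uniform boundedness of $f(x_l) + f_2(x_l)$ and of $\sum_{j=1}^m (\pi_j^{-1}-1) g_j^*(y_l^{(j)})$ bounds the Bregman part $D_p$ of $\Delta_l$ by a constant, so altogether $\Delta_l(\tilde z_l) \le C_3' + C_5' (r^2)^l \Delta_0(z_0)$ on an almost sure event. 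Plugging this and $\epsilon_l \le \delta^l \epsilon_0$ into $k_l \le C_4 \Delta_l(\tilde z_l) L/(\rho \epsilon_l) + 1$ and summing the two geometric series $\sum_l \delta^{-l}$ and $\sum_l (r^2/\delta)^l$ gives exactly the stated bound, with $C_3$ and $C_5$ collecting the problem-dependent constants. For the complexity corollary, taking $r = \delta^{-\alpha/2}$ makes $r^2/\delta = \delta^{-(\alpha+1)}$; reaching accuracy $\epsilon$ requires $\delta^L \epsilon_0 \le \epsilon$, i.e.\ $L \approx \log_{1/\delta}(\epsilon_0/\epsilon)$, whence $\delta^{-L} \approx \epsilon_0/\epsilon$ and $(r^2/\delta)^L \approx (\epsilon_0/\epsilon)^{\alpha+1}$; the second term dominates and yields $\sum_l k_l = O\!\big(L^2 \Delta_0(z_0) \epsilon_0^{\alpha} \rho^{-1} \epsilon^{-(\alpha+1)}\big)$, which is $O(\epsilon^{-\alpha-1})$ once the $\rho^{-1}$ and the $\log^2(1/\epsilon)$ factor are neglected, the first term being only $O(\epsilon^{-1}\log(1/\epsilon))$.

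I expect the main obstacle to be the joint handling of the random stopping times $k_l$ and the random potentials $\Delta_l(\tilde z_l)$ across stages: the per-stage estimates hold only in conditional expectation, and turning them into a single high-probability iteration count requires both the union-bound argument above and a careful propagation of $\Delta_l(\tilde z_l)$ through the restarts. This is precisely why the two boundedness hypotheses (preventing $\Delta_l(\tilde z_l)$ from growing faster than $(r^2)^l$) and the $s$-independence of the constant supplied by Lemma~\ref{lem:pure-cd-averaged-sequence} are needed; the remaining steps — applying the PURE-CD ergodic rate, Markov's inequality, and summing geometric series — are then routine.
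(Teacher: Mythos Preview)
Your plan matches the paper's proof closely: per-stage PURE-CD ergodic rate, center-switching to the self-centered gap via Lemma~\ref{lem:pure-cd-averaged-sequence}, Markov plus a union bound over the $L$ stages, propagation of $\Delta_l$ through restarts via the supermartingale property and the step-size rescaling, and summation of the two geometric series. Two points deserve adjustment.

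First, the paper centers the PURE-CD ergodic bound at the stage's initial point $\tilde z_l$, not at $z_*$: the rate from \cite{alacaoglu2020random} reads $\mathbb E[G_{c_1/k,\, c_2/k}(z_k^{\mathrm{av}}, \tilde z_l) \mid \tilde z_l] \le (C_3 + C_4 \Delta_l(\tilde z_l))/k$, and then \cite[Lemma~6]{fercoq2022quadratic} together with Lemma~\ref{lem:pure-cd-averaged-sequence} switches the center to $z_k^{\mathrm{av}}$ directly. Your detour through $z_*$ is harmless but unnecessary.

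Second, and more substantively, the additive constant $C_3$ does \emph{not} arise by bounding the Bregman part $D_p$ of $\Delta_l$ as you write. In the paper, $C_3 = C_3(\tilde z_l)$ is an additive constant in the PURE-CD rate itself that depends on function values at $\tilde z_l$, and the uniform-boundedness hypothesis is invoked precisely to bound $C_3(\tilde z_l)$ uniformly in $l$. Your attempt to combine the expectation bound $\mathbb E[\Delta_l(\tilde z_l)] \le (r^2)^l \Delta_0(z_0)$ with an almost-sure bound on $D_p$ to conclude an almost-sure bound $\Delta_l(\tilde z_l) \le C_3' + C_5'(r^2)^l \Delta_0(z_0)$ is logically muddled: an expectation bound cannot localize the norm part pathwise. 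In the paper one simply takes full expectation in the rate inequality, uses $C_3(\tilde z_l) \le C_3$ a.s.\ by hypothesis and $\mathbb E[\Delta_l(\tilde z_l)] \le r^{2l} \Delta_0(z_0)$ by the supermartingale-plus-rescaling argument, and this expectation bound on $G$ is all that Markov's inequality requires.
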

\begin{proof}	
We have $\mathbb E[\Delta_l(z_k) | \tilde z_{l}] \leq \Delta_l(\tilde z_{l})$, from which we get
\begin{align}
\mathbb E[\Delta_l(\tilde z_{l+1}) | z_{l}] = \mathbb E[\Delta_l(z_{k_l}) | z_{l}] \leq \Delta_l(z_{l}) \leq r \Delta_{l-1}(z_{l}) \;. \label{eq:delta_decrease}
\end{align}

For PURE-CD, we can find in \cite{alacaoglu2020random} that there exists constants $c_1 \leq 2 + 2\underline p, c_2\leq 2, C_3, C_4$ that depend on the step sizes only through the product $\tau^i \sigma^j$, and averaged primal-dual iterates $z_k^{\mathrm{av}}$ such that 
\begin{align*}
\mathbb E[G_{\frac{c_1}{k}, \frac{c_2}{k}}(z_k^{\mathrm{av}}, \tilde z_{l}) | \tilde z_l] \leq \frac{C_3 + C_4 \Delta_l(\tilde z_{l})}{k} \;.
\end{align*}
Moreover by \cite[Lemma 6]{fercoq2022quadratic}, 
\[
G_{\frac{c_1}{k}, \frac{c_2}{k}}(z_k^{\mathrm{av}}, \tilde z_{l}) \geq G_{\frac{2c_1}{k}, \frac{2c_2}{k}}(z_k^{\mathrm{av}}, z_k^{\mathrm{av}}) - \frac 1 k\|z_k^{\mathrm{av}}-\tilde z_l\|^2_{(2c_1, 2c_2)} \;.
\]
Using Lemma~\ref{lem:pure-cd-averaged-sequence}, we get
\[
\mathbb E[G_{\frac{2c_1}{k}, \frac{2c_2}{k}}(z_k^{\mathrm{av}}, z_k^{\mathrm{av}})|\tilde z_l] \leq \frac{C_3 + C_4 \Delta_l(\tilde z_{l})}{k} + \max_{i,j}(2c_1 p_i \tau_i, 2c_2 \pi_j \sigma_j) C \frac{\Delta_l(\tilde z_{l})}{k}
\] 
$C_3$ depends on $z_{l}$ but the technical assumption we make ensures that it is uniformly bounded along the run of the algorithm.
We can combine this with \eqref{eq:delta_decrease} to get 
\begin{align*}
\mathbb E[G_{\frac{2c_1}{k}, \frac{2c_2}{k}}(z_k^{\mathrm{av}}, z_k^{\mathrm{av}})] \leq \frac{C_3 + C_5 r^{2l}\Delta_0(z_{0})}{k} \;.
\end{align*}
where $C_5 = C_4 + C \max_{i,j}(2c_1 p_i \tau_i(s_0), 2c_2 \pi_j \sigma_j(s_0))$.

Hence, by Markov's inequality,
\begin{align*}
\mathbb P(G_{\frac{2c_1}{k}, \frac{2c_2}{k}}(z_k^{\mathrm{av}}, z_k^{\mathrm{av}}) > \epsilon_l) \leq \frac{C_3 + C_5 r^{2l} \Delta_0(z_{0})}{k \epsilon_l} \;. 
\end{align*}
This implies that, given $\rho > 0$, as soon as $k_l = \lceil L\frac{C_3 + r^{2l} C_5 \Delta_0(z_{0})}{\rho \epsilon_l}\rceil$, we need to have $\mathbb P(G_{\frac{2c_1}{k_l}, \frac{2c_2}{k_l}}(z_{k_l}^{\mathrm{av}}, z_{k_l}^{\mathrm{av}}) > \epsilon_l) \leq \frac{\rho}{L}$.

Now, we use $\epsilon_l = \delta^l \epsilon_0$ and use a union bound to get that with probability larger than $1-\rho$, 
\begin{align*}
\sum_{l=0}^{L-1} k_l \leq \sum_{l=0}^{L-1} L\frac{C_3 + C_5 r^{2l} \Delta_0(z_{0})}{\rho \delta^l \epsilon_0} + 1 \leq \frac{L C_3}{\rho \epsilon_0} \frac{\delta^{-L}-1}{\delta^{-1} - 1}
+ \frac{L C_5 \Delta_0(z_0)}{\rho \epsilon_0} \frac{(r^2/\delta)^L - 1}{r^2/\delta - 1} + L
\end{align*}

Denote $G(k)$ the value of the smoothed gap we have after a total of $k$ inner iterations.
We know that there exists a constant $c$ such that $G(c (r^2/\delta)^L) \leq \delta^L$.
If $r = \delta^{-\alpha/2}$, we get  $G(c (\delta^{-\alpha-1})^L) \leq \delta^L$.

Hence, to have $\delta^L \leq \epsilon$, we need $L = \frac{\ln(\epsilon)}{\ln(\delta)}$ outer iterations
and thus a total number of inner iterations at most equal to $c (\delta^L)^{-\alpha-1} = c \epsilon^{-\alpha-1}$.
\end{proof}

\section{Numerical evaluation}

In order to check whether our method has a practical advantage or not, we performed some numerical comparisons.
We performed experiments on a computer with 8 11th Gen Intel(R) Core(TM) i7-1165G7 @ 2.80GHz CPU processors and 16GB RAM. The source code can be found on \url{https://perso.telecom-paristech.fr/ofercoq/Software.html}. It uses the generic primal-dual coordinate descent solver developed in~\cite{fercoq2021generic}.

\subsection{Regularized least squares}

\begin{figure}[htbp]
	\centering
	
	\includegraphics[width=0.7\linewidth, trim=0 2ex 0 8ex, clip]{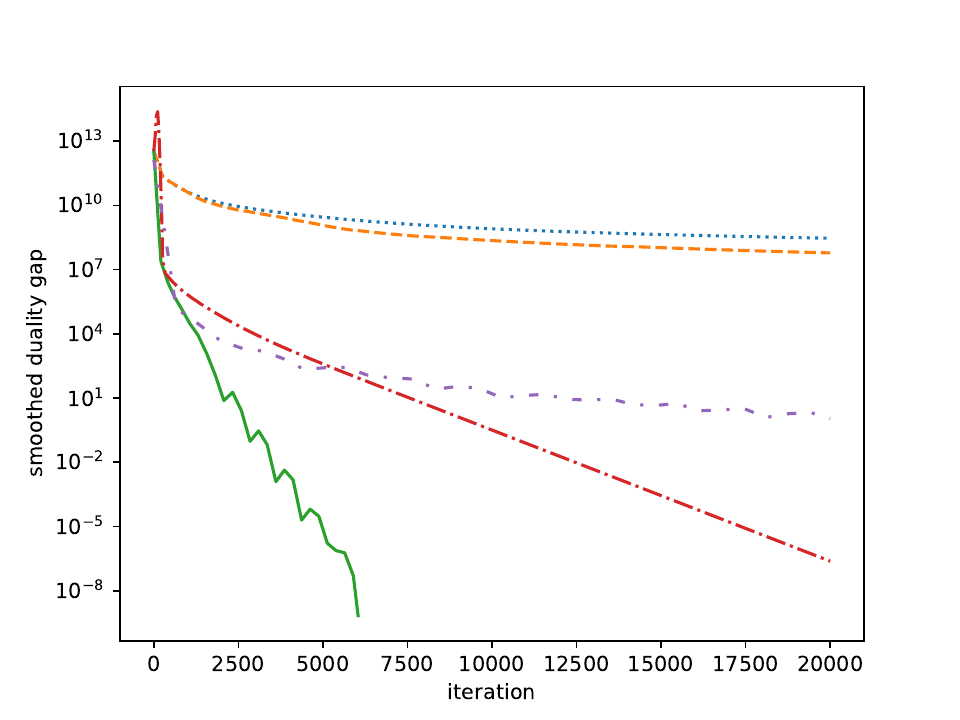}
	\caption{Comparison of various adaptive step sizes strategies for PDHG. Quadratic problem: $\lambda = 10^{-3}$, $A$ and $b$ given in the \texttt{a1a} dataset \cite{chang2011libsvm} for $\min_x \frac 12 \|Ax-b\|^2_2 + \frac{\lambda}{2n}\|x\|^2_2$. We initialize the step size with a factor 1000 compared to the optimal step sizes given in Section~\ref{sec:linconv}. Dotted blue line: constant step sizes. Dashed orange line: Alg. \ref{alg:adap_fercoq} based on rate estimation using residual norm and with $\alpha_0 = 0$. Dash-dotted red line: Goldstein et al.'s adaptive step sizes (Alg. \ref{alg:goldstein}). Solid green line: Alg. \ref{alg:adap_fercoq} combining Goldstein et al's and our step size adaptation. Loosely dash-dotted purple line: Restarted FISTA with optimal restart period.}
	\label{fig:quadratic}
\end{figure}

On Figure~\ref{fig:quadratic}, we look at the behavior of Algorithm~\ref{alg:adap_fercoq} on a quadratic problem: $\ell_2$ regularized least squares. Note that Algorithm~\ref{alg:adap_fercoq} has been designed for quadratic problems. We can see that the conclusions of Figure~\ref{fig:toy_problem} drawn on a toy problem still hold: Goldstein et al.'s step size adaptation rule leads to a fair convergence rate but it can still be significantly improved. However, trying to monitor the convergence rate is too slow when step sizes were initially badly set.

\subsection{Linear program}

On Figure~\ref{fig:sparse_svm}, we consider a linear program. As expected, the adaptive step size rules have no influence on the asymptotic rate of convergence. However, they can reduce or increase the length of the active set discovery phase.
We note that in this case, averaging and restarting PDHG (with legend RAPDHG) greatly improves the speed of convergence \cite{fercoq2022quadratic,applegate2023faster}.

\begin{figure}[htbp]
	\centering
	
	\includegraphics[width=0.7\linewidth, trim=0 2ex 0 8ex, clip]{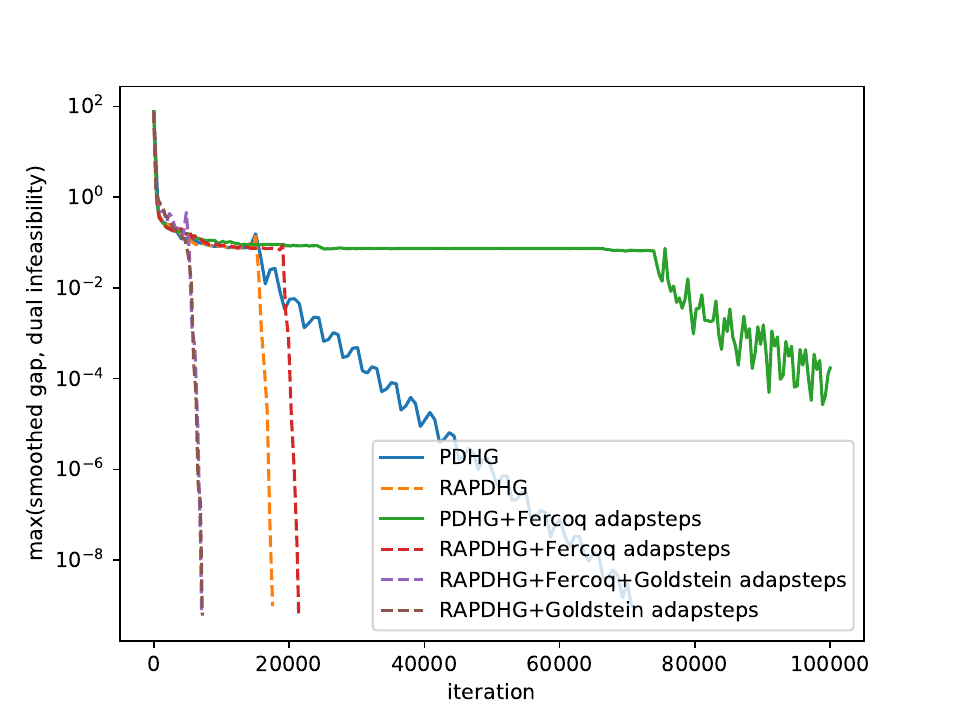}
	\caption{Comparison of various adaptive step sizes strategies for PDHG. Sparse SVM problem (linear program): $A$ and $b$ given in the \texttt{a1a} dataset for $\min_w \sum_i \max(0, 1 - y_i(a_i w)) + ||w||_1$. Averaging and restarting PDHG, with legend RAPDHG and dashed lines, improves significantly the rate \cite{fercoq2022quadratic}. However, changing the step sizes of PDHG has no influence on the rate. Trying to monitor the rate can even be detrimental to the transient active set discovery phase (green and red curves). On this problem, it looks like Goldstein et al.'s adaptive step sizes reduce the length of the transient phase. Moreover, combining both step size strategies as in Alg.~\ref{alg:adap_fercoq} gives two identical convergence profiles.}
	\label{fig:sparse_svm}
\end{figure}

\subsection{TV-L1}

Next, we turn to larger problems and we use PURE-CD (Algorithm~\ref{alg:pure-cd}) to solve them. We first consider a TV-L1 problem which can be written as 
\begin{align*}
\min_x \lambda \|x - I\|_1 + \|Dx\|_{2,1}
\end{align*}
where $I$ is an image and $D$ is the 2D discrete gradient, $\|z\|_{2,1} = \sum_{p \in P} \sqrt{z_{p,1}^2 + z_{p,2}^2}$ and $\lambda = 1.9$. We used the cameraman image (256 x 256 pixels) for the experiment. On Figure~\ref{fig:tvl1_purecd}, we can see that both adaptive step size techniques significantly outperform the default constant step size (\eqref{eq:tau_from_s}\eqref{eq:sigma_from_s} with $s=1$).
\begin{figure}
	\centering
\includegraphics[width=0.7\linewidth]{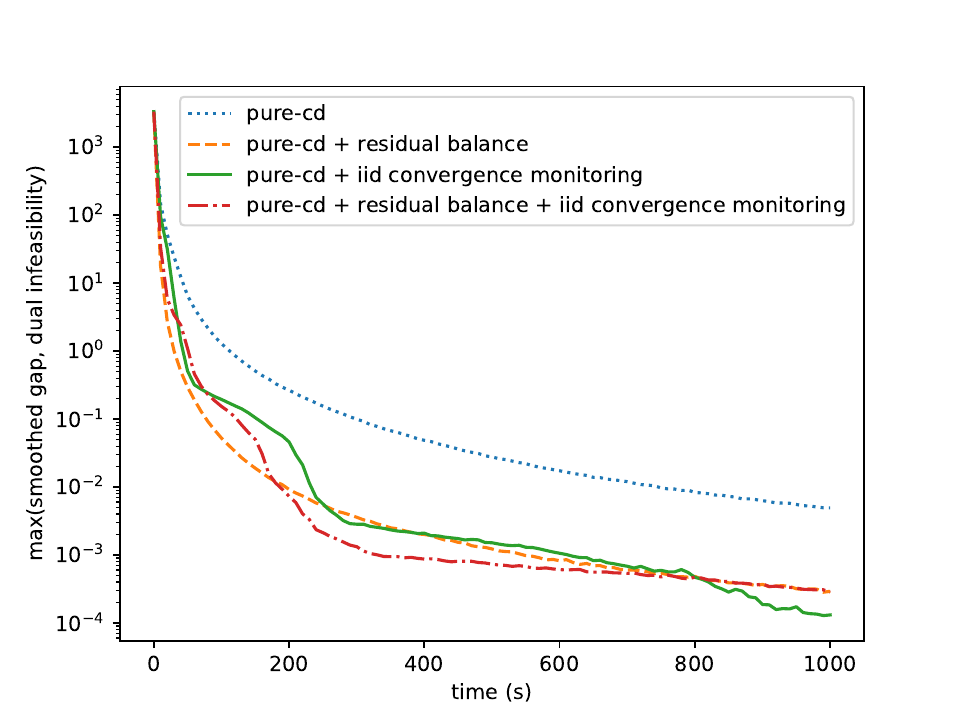}
\caption{Smoothed duality gap as a function of time for the resolution of the TV-L1 problem. We are comparing constant step-size PURE-CD with versions where we adapt step sizes based on residual balance or convergence monitoring (i.i.d. model). On this problem, both adaptive step techniques behave similarly, and they significantly outperform the default constant step size.}
\label{fig:tvl1_purecd}
\end{figure}

\subsection{TV-L2}

\begin{figure}
	\centering
\includegraphics[width=0.7\linewidth]{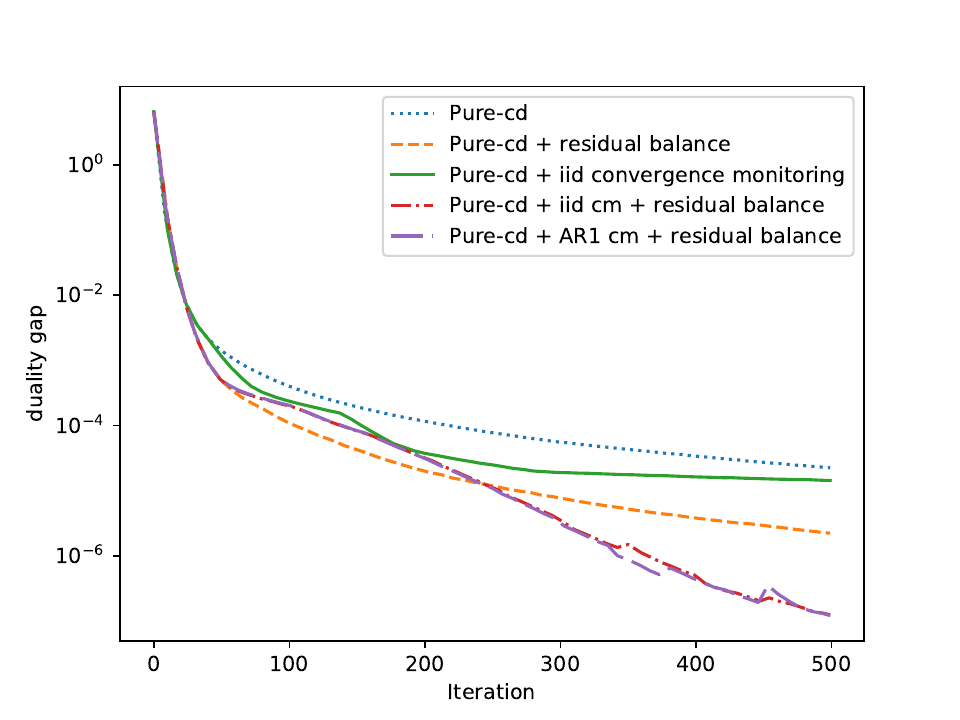}
\caption{Duality gap as a function of iteration for the TV-L2 problem. We compare default step sizes (\eqref{eq:tau_from_s}\eqref{eq:sigma_from_s} with $s=1$), residual balance (Prop. \ref{prop:estimate_subgradient_purecd}), convergence monitoring (Alg. \ref{alg:cm_pure_cd}) and the combination of both of them.}
\label{fig:tvl2_purecd}
\end{figure}

%

The second large scale experiment is a TV-L2 denoising problem. We consider the image $I$ of a hen (574 x 650 pixels) and we solve the optimization problem
\begin{align*}
\min_x \lambda \|x - I\|_2^2 + \|Dx\|_{2,1}
\end{align*}

We can see on Figure \ref{fig:tvl2_purecd} a behavior similar to the previous experiment. Residual balance does help in choosing good step sizes but they are not optimal. Convergence monitoring may be less reactive but leads to better step sizes. 
In particular, initializing convergence monitoring after some iterations where we try to balance the residuals gives a quite nice solution. We note that the stochastic model for the rate seems to have a minor influence on the algorithm.

\subsection{Square-root Lasso problem}

Finally, we consider a square-root lasso problem
\begin{align*}
\min_{x \in \mathbb R^d} \|Ax - b\|_2 + \lambda \|x\|_1
\end{align*}
We chose $A$ and $b$ from the Leukemia dataset \cite{chang2011libsvm} and $\lambda = \frac{\|A^\top b\|_\infty}{1.1 \|b\|_2}$. 
We first can see on Figure~\ref{fig:square-root-lasso} that using a randomized algorithm indeed gives a much faster algorithm that deterministic PDHG, even if Alg.~\ref{alg:adap_fercoq} does help. For this problem, the default step size of PURE-CD is already quite good. Convergence monitoring only validates this step size and does not change it. However, our version of residual balance is not well set for this problem: it chooses a very step size that leads to a very slow algorithm. Fortunately, our convergence monitoring method can detect that the initial step size was better and recover the fast behavior.

\begin{figure}
	\centering
	\includegraphics[width=0.7\linewidth]{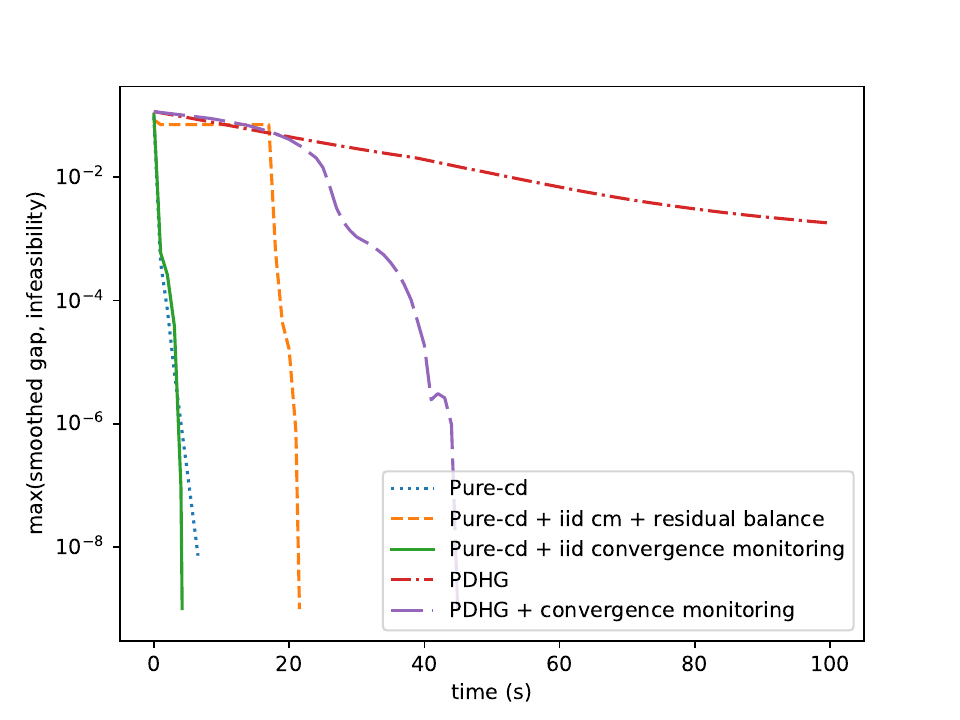}
	\caption{Comparison of PURE-CD and PDHG together with their adaptive step size versions on a square-root lasso problem.}
	\label{fig:square-root-lasso}
\end{figure}

\section{Conclusion}

In this paper, we studied adaptive step sizes for PDHG beyond residual balance. 
Our proposal is to monitor convergence estimates in order to compare tentative step sizes and select the better one. The challenge lies in designing fast and accurate convergence rate estimates. We showed on numerical experiments that using an appropriate norm and detecting periodic features in the instantaneous rate estimate, we obtain an algorithm which is competitive against the state of the art. 

We also developed convergence monitoring based adaptive step sizes for a randomized version of the algorithm, called PURE-CD, with promising results. In the randomized case, we proposed two simple models for the instantaneous rate estimate and we delay the decision of changing step sizes to the point where we can see a statistically significant difference in the estimated rates. We could combine it with a residual balance technique and our technique allows us to compare and choose the best step size between the initial one and the one that leads to residual balance.

Finally, we proposed a new convergence guarantee for adaptive step sizes which allows for significant changes even after many iterations and does not limit a priori the amount of change in step size magnitude.

Future works may focus on the following aspects:
\begin{itemize}
	\item Like previous works, we did not prove that our adaptive step size technique indeed leads to a faster algorithm. We only showed that our methods do not prevent convergence.
	\item A study of convergence properties beyond the quadratic case may help monitor convergence before activity identification occurs and thus improve the speed of the algorithm in the initial phases. 
\end{itemize}

\section*{Acknowlegments}

This work was supported by the Agence National de la Recherche grant ANR-20-CE40-0027, Optimal Primal-Dual Algorithms (APDO). 

\bibliographystyle{alpha}
\bibliography{literature}

\newcommand{\etalchar}[1]{$^{#1}$}
\begin{thebibliography}{TDAFC20}

\bibitem[ADH{\etalchar{+}}21]{applegate2021practical}
David Applegate, Mateo D{\'\i}az, Oliver Hinder, Haihao Lu, Miles Lubin,
  Brendan O'Donoghue, and Warren Schudy.
\newblock Practical large-scale linear programming using primal-dual hybrid
  gradient.
\newblock {\em Advances in Neural Information Processing Systems},
  34:20243--20257, 2021.

\bibitem[AFC20]{alacaoglu2020random}
Ahmet Alacaoglu, Olivier Fercoq, and Volkan Cevher.
\newblock Random extrapolation for primal-dual coordinate descent.
\newblock In {\em International conference on machine learning}, pages
  191--201. PMLR, 2020.

\bibitem[AHLL23]{applegate2023faster}
David Applegate, Oliver Hinder, Haihao Lu, and Miles Lubin.
\newblock Faster first-order primal-dual methods for linear programming using
  restarts and sharpness.
\newblock {\em Mathematical Programming}, 201(1-2):133--184, 2023.

\bibitem[BC11]{bauschke2011convex}
Heinz~H Bauschke and Patrick~L Combettes.
\newblock {\em Convex analysis and monotone operator theory in Hilbert spaces}.
\newblock Springer, 2011.

\bibitem[CDE{\etalchar{+}}23]{chambolle2023stochastic}
Antonin Chambolle, Claire Delplancke, Matthias~J Ehrhardt, Carola-Bibiane
  Sch{\"o}nlieb, and Junqi Tang.
\newblock Stochastic primal dual hybrid gradient algorithm with adaptive
  step-sizes.
\newblock {\em arXiv preprint arXiv:2301.02511}, 2023.

\bibitem[CERS18]{chambolle2018stochastic}
Antonin Chambolle, Matthias~J Ehrhardt, Peter Richt{\'a}rik, and Carola-Bibiane
  Schonlieb.
\newblock Stochastic primal-dual hybrid gradient algorithm with arbitrary
  sampling and imaging applications.
\newblock {\em SIAM Journal on Optimization}, 28(4):2783--2808, 2018.

\bibitem[CKCH23]{condat2023proximal}
Laurent Condat, Daichi Kitahara, Andr{\'e}s Contreras, and Akira Hirabayashi.
\newblock Proximal splitting algorithms for convex optimization: A tour of
  recent advances, with new twists.
\newblock {\em SIAM Review}, 65(2):375--435, 2023.

\bibitem[CL11]{chang2011libsvm}
Chih-Chung Chang and Chih-Jen Lin.
\newblock Libsvm: a library for support vector machines.
\newblock {\em ACM transactions on intelligent systems and technology (TIST)},
  2(3):1--27, 2011.

\bibitem[Con13]{condat2013primal}
Laurent Condat.
\newblock A primal--dual splitting method for convex optimization involving
  lipschitzian, proximable and linear composite terms.
\newblock {\em Journal of optimization theory and applications},
  158(2):460--479, 2013.

\bibitem[CP11]{chambolle2011first}
Antonin Chambolle and Thomas Pock.
\newblock A first-order primal-dual algorithm for convex problems with
  applications to imaging.
\newblock {\em Journal of mathematical imaging and vision}, 40:120--145, 2011.

\bibitem[DY16]{davis2016convergence}
Damek Davis and Wotao Yin.
\newblock Convergence rate analysis of several splitting schemes.
\newblock {\em Splitting methods in communication, imaging, science, and
  engineering}, pages 115--163, 2016.

\bibitem[FB19]{fercoq2019coordinate}
Olivier Fercoq and Pascal Bianchi.
\newblock A coordinate-descent primal-dual algorithm with large step size and
  possibly nonseparable functions.
\newblock {\em SIAM Journal on Optimization}, 29(1):100--134, 2019.

\bibitem[Fer21]{fercoq2021generic}
Olivier Fercoq.
\newblock A generic coordinate descent solver for non-smooth convex
  optimisation.
\newblock {\em Optimization Methods and Software}, 36(6):1202--1222, 2021.

\bibitem[Fer22]{fercoq2022quadratic}
Olivier Fercoq.
\newblock Quadratic error bound of the smoothed gap and the restarted averaged
  primal-dual hybrid gradient.
\newblock {\em arXiv preprint arXiv:2206.03041}, 2022.

\bibitem[GEB13]{goldstein2013adaptive}
Tom Goldstein, Ernie Esser, and Richard Baraniuk.
\newblock Adaptive primal dual optimization for image processing and learning.
\newblock In {\em Proceedings of the 6th NIPS Workshop on Optimization for
  Machine Learning}, 2013.

\bibitem[GLY15]{goldstein2015adaptive}
Tom Goldstein, Min Li, and Xiaoming Yuan.
\newblock Adaptive primal-dual splitting methods for statistical learning and
  image processing.
\newblock {\em Advances in neural information processing systems}, 28, 2015.

\bibitem[Hag21]{hager2021applied}
William~W Hager.
\newblock {\em Applied numerical linear algebra}.
\newblock SIAM, 2021.

\bibitem[IF22]{islamov2021pdhg}
Rustem Islamov and Olivier Fercoq.
\newblock Efficiency of primal-dual hybrid gradient method.
\newblock Master's thesis, 2022.

\bibitem[LFP18]{liang2018local}
Jingwei Liang, Jalal Fadili, and Gabriel Peyr{\'e}.
\newblock Local linear convergence analysis of primal--dual splitting methods.
\newblock {\em Optimization}, 67(6):821--853, 2018.

\bibitem[LP18]{latafat2018primal}
Puya Latafat and Panagiotis Patrinos.
\newblock Primal-dual proximal algorithms for structured convex optimization: A
  unifying framework.
\newblock {\em Large-Scale and Distributed Optimization}, pages 97--120, 2018.

\bibitem[LY22]{lu2022infimal}
Haihao Lu and Jinwen Yang.
\newblock On the infimal sub-differential size of primal-dual hybrid gradient
  method.
\newblock {\em arXiv preprint arXiv:2206.12061}, 2022.

\bibitem[Nes03]{nesterov2003introductory}
Yurii Nesterov.
\newblock {\em Introductory lectures on convex optimization: A basic course},
  volume~87.
\newblock Springer Science \& Business Media, 2003.

\bibitem[TDAFC20]{tran2020adaptive}
Quoc Tran-Dinh, Ahmet Alacaoglu, Olivier Fercoq, and Volkan Cevher.
\newblock An adaptive primal-dual framework for nonsmooth convex minimization.
\newblock {\em Mathematical Programming Computation}, 12(3):451--491, 2020.

\bibitem[TDFC18]{tran2018smooth}
Quoc Tran-Dinh, Olivier Fercoq, and Volkan Cevher.
\newblock A smooth primal-dual optimization framework for nonsmooth composite
  convex minimization.
\newblock {\em SIAM Journal on Optimization}, 28(1):96--134, 2018.

\bibitem[TT83]{tiao1983consistency}
George~C Tiao and Ruey~S Tsay.
\newblock Consistency properties of least squares estimates of autoregressive
  parameters in arma models.
\newblock {\em The Annals of Statistics}, pages 856--871, 1983.

\bibitem[V{\~u}13]{vu2013splitting}
Bang~C{\^o}ng V{\~u}.
\newblock A splitting algorithm for dual monotone inclusions involving
  cocoercive operators.
\newblock {\em Advances in Computational Mathematics}, 38:667--681, 2013.

\bibitem[WF24]{walwil:hal-04501394}
Iyad Walwil and Olivier Fercoq.
\newblock {The Smoothed Duality Gap as a Stopping Criterion}.
\newblock working paper or preprint, March 2024.

\end{thebibliography}
	
\end{document}